\numberwithin{equation}{section}
\newtheorem{theorem}{Theorem}[section]
\newtheorem{corollary}[theorem]{Corollary}
\newtheorem{lemma}[theorem]{Lemma}
\newtheorem{proposition}[theorem]{Proposition}
\newtheorem{conjecture}[theorem]{Conjecture}
\theoremstyle{definition}
\newtheorem{remark}[theorem]{Remark}
\newtheorem{question}[theorem]{Question}
\newtheorem{speculation}[theorem]{Speculation}
\newcommand{\ul}[1]{\mathbf{#1}}
\def\oM{\overline{\mathcal{M}}}
\def\cM{{\mathcal{M}}}
\def\qed{{\hfill $\square$}}
\def\b1{{\bf 1}}
\title{Zero cycles on the moduli space of curves}
\author{Rahul Pandharipande and Johannes Schmitt}
\address{Departement Mathematik, ETH Z\"urich}
\email{\texttt{rahul@math.ethz.ch} and \texttt{johannes.schmitt@math.ethz.ch}}
\begin{document}


\removeabove{20pt}
\removebetween{20pt}
\removebelow{20pt}

\maketitle

\begin{prelims}

\DisplayAbstractInEnglish

\medskip

\DisplayKeyWords

\medskip

\DisplayMSCclass

\medskip

\languagesection{Fran\c{c}ais}

\medskip

\DisplayTitleInFrench

\medskip

\DisplayAbstractInFrench

\end{prelims}


\newpage

\setcounter{tocdepth}{2}

\tableofcontents


\section{Introduction}

\subsection{Moduli of curves} \label{xcxc}

Let $(C,p_1,\ldots,p_n)$ be a Deligne-Mumford stable curve of
genus $g$ with $n$ marked points
defined over $\mathbb{C}$.
Let  
$$[C,p_1,\ldots,p_n]\in  {\oM}_{g,n}\,  $$
be the associated {\em moduli point} in 
the moduli space.{\footnote{Stability requires $2g-2+n>0$ which
we always impose when we write $\oM_{g,n}$.}}
As a Deligne-Mumford stack, ${\oM}_{g,n}$
is nonsingular, irreducible, and of complex dimension $3g-3+n$.
Though the moduli spaces ${\oM}_{g,n}$ can be irrational and complicated, their study has been
marked by the discovery of beautiful mathematical structures.
 
Fundamental to the geometry of the 
 moduli spaces of stable pointed curves are 
three basic types of morphisms: 
\begin{enumerate}
  \item[(i)]
forgetful morphisms
$$p:\oM_{g,n+1} \rightarrow \oM_{g,n}$$
defined by dropping a marking,
\item[(ii)] irreducible boundary morphisms
$$
q : \oM_{g-1, n+2} \to \oM_{g,n}\ 
$$
defined by identifying two markings to create a node,

\item[(iii)] reducible boundary morphisms
$$
r: \oM_{g_1, n_1+1} \times \oM_{g_2, n_2+1} \to \oM_{g,n}\, , 
$$
where $n=n_1+n_2$ and $g=g_1+g_2$,
defined by identifying the markings of separate pointed curves.
\end{enumerate}
Following \cite[Section 0.1]{faberpandharipande}, the tautological rings{\footnote{Chow groups will be taken with $\mathbb{Q}$-coefficients
unless explicitly stated otherwise.}}
$$R^*(\oM_{g,n}) \subset A^*(\oM_{g,n})$$
are defined as the smallest system of $\mathbb{Q}$-subalgebras (with unit)
closed under push-forward by all morphisms (i)-(iii).
We denote the group of tautological $k$-cycles by
$$R_k({\oM}_{g,n}) = R^{3g-3+n-k}(\oM_{g,n})\, .$$
For an introduction to the current study of tautological classes, we
refer the reader to \cite{faberpantautnottaut,calcmodcurves}.

\subsection{$0$-cycles in the tautological ring}
Whenever the moduli space $\oM_{g,n}$ is rationally connected, we have $$A_0(\oM_{g,n})\stackrel{\sim}{=} \mathbb{Q}\, .$$
Rational connectedness
is known at least in the cases appearing in 
Figure  \ref{Fig:ratconnMgn}. For genus 23 and higher,
$\oM_{g,n}$ is never rationally connected.

\begin{figure}[h]
 \begin{tabular}{|c|c|c|c|c|c|c|c|c|c|c|c|c|c|c|c|c|}
  \hline
  $g$ & 0 & 1 & 2 & 3 & 4 & 5 & 6 & 7&8&9&10&11&12&13&14&15\\
  \hline
  $n_{\mathrm{max}}$ & $\infty$ & 10 & 12 & 14 &15 & 12 & 15 & 11 & 8 & 9 & 3 &10 & 1&0 & 2&0\\
  \hline
 \end{tabular}
 \caption{$\oM_{g,n}$ is rationally connected for $n \leq n_{\mathrm{max}}$, see \cite{benzo,brunoverra, casnatifontanari, farkas, logankodaira, verra}.}
 \label{Fig:ratconnMgn}
\end{figure}

On the other hand, the Chow groups of $0$-cycles 
are of infinite rank as $\mathbb{Q}$-vector spaces
 at least in the following genus 1 and 2 cases (due to
the existence{\footnote{By results of Mumford and Srinivas (see \cite{mumford, roitman, srinivas}
and \cite[Remark 1.1]{grabervakiltaut}), 
the existence of a holomorphic $p$-form for $p \geq 1$ forces
$A_0(\oM_{g,n})$ to have  infinite rank.
Constructions of 
such forms in $g=1$ and $g=2$ are well-known,
see \cite{faberpantautnottaut}.
}}
of holomorphic $p$-forms):
$$A_0({\oM}_{1,n\geq 11})\, , \ \ A_0({\oM}_{2,n\geq 14})\, .$$
Moreover,  such forms{\footnote{There are no written proofs for
the genus 3 and 4 claims, but these expectations, based
on geometric calculations, have been
communicated to us by Faber (in genus 3) and
Farkas (in genus 4).}} 
and infinite ranks are expected
in the following genus $3$ and $4$
cases:
$$A_0({\oM}_{3,n\geq 15})\, ,\ \ A_0({\oM}_{4,n\geq 16})\, .$$
While the data is insufficient for a general prediction, the following 
speculation 
would not be surprising.

\begin{speculation} For $g \geq 1$, the Chow group $A_0({\oM}_{g,n})$ is
of infinite rank except for {\em finitely} many  $(g,n)$.
\end{speculation}

On the other hand, the group 
$R_0(\oM_{g,n})$
of tautological $0$-cycles is much better behaved.
The following result was proven by Graber and Vakil
in \cite{grabervakiltaut} and also 
in \cite{faberpandharipande,Looj}. 

\begin{proposition} For all $(g,n)$, we have
$R_0({\oM}_{g,n}) \stackrel{\sim}{=} \mathbb{Q}$.
\end{proposition}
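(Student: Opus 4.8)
The plan is to show that $R_0(\oM_{g,n})$ is generated by a single class, namely the class of any point supported on the deepest stratum of the boundary, and that all such point classes coincide rationally. The starting observation is that the deepest boundary strata of $\oM_{g,n}$ consist of stable curves all of whose components are rational with three special points (i.e.\ copies of $\oM_{0,3} = \mathrm{pt}$ glued along nodes according to a trivalent graph). Such a point is manifestly tautological: it is obtained from $[\oM_{0,3}]$ by iterated push-forward along the gluing morphisms of type (ii) and (iii). So $R_0(\oM_{g,n})$ is nonzero, and it suffices to prove that every tautological $0$-cycle is a rational multiple of such a point class, or equivalently that the degree map $R_0(\oM_{g,n}) \to \mathbb{Q}$ is injective.

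For this I would argue that $R_0(\oM_{g,n})$ is \emph{spanned} by a single such maximally-degenerate point class $[\mathrm{pt}_0]$. Take any additive generator of $R_0(\oM_{g,n})$: by definition of the tautological ring it is a push-forward under one of the morphisms (i)--(iii) of a tautological $0$-cycle on a smaller (in dimension) moduli space, intersected with monomials in $\psi$- and $\kappa$-classes and boundary divisors. One then induces on $3g-3+n$. A $0$-cycle on $\oM_{g,n}$ that is a product of a positive-dimensional tautological class with something of complementary dimension can, using the standard relations, be rewritten in terms of push-forwards from the boundary: the key point is that any $\psi$ or $\kappa$ class, when we are in the $0$-dimensional situation, can be traded (via the comparison formulas $\psi_i = p^*\psi_i + [\text{boundary}]$ along the forgetful map, and the known expressions for $\kappa$ in terms of $\psi$ and boundary) for boundary contributions, pushing the computation onto a product of moduli spaces of strictly smaller total dimension, where the inductive hypothesis applies. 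Since the boundary of $\oM_{g,n}$ is connected (for $2g-2+n>0$, with the single exception $\oM_{1,1}$, $\oM_{0,3}$ handled by hand) and each boundary stratum's fundamental class pushes forward the point class of the previous stage, all the resulting point classes are rationally equivalent, so everything collapses to $\mathbb{Q}\cdot[\mathrm{pt}_0]$.

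The main obstacle is the bookkeeping in the inductive step: one must verify that after multiplying a lower tautological cycle by the required monomial in tautological \emph{divisors} and $\psi/\kappa$ classes to reach dimension $0$, the result genuinely reduces to point classes coming from the boundary, and that no ``interior'' contribution survives. Concretely, this uses that on $\oM_{0,n}$ all of $A^*$ is tautological and generated by boundary (so the base case is clear), and that pulling back along the forgetful morphism $p$ and applying the projection/comparison formulas expresses $p_*$ of a monomial as a combination of boundary pushforwards plus a term on $\oM_{g,n-1}$ of lower dimension. Carrying this through uniformly for all three morphism types, and checking the finitely many low-dimensional base cases ($\oM_{1,1}$, $\oM_{1,2}$, $\oM_{2,0}$, etc.) directly, completes the argument. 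An alternative and cleaner route, which I would mention, is to invoke that $\oM_{g,n}$ admits a tautological stratification whose closed strata of positive dimension all have boundary, together with the fact that on each open stratum $\mathcal{M}_{g,n}$ the tautological $0$-cycles (if any) vanish in the Chow group by a moving-lemma/excess argument; but the boundary-reduction induction above is the most self-contained.
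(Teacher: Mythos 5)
Your overall strategy---reduce every tautological $0$-cycle to the class of a maximally degenerate point with all genus~$0$ vertices, then use rationality/connectedness of the genus~$0$ strata to see that all such point classes agree---is exactly the shape of the paper's argument. But there is a genuine gap at the crucial step, and the mechanism you propose to fill it does not work. The additive generators of $R_0(\oM_{g,n})$ (from the Appendix of Graber--Pandharipande) are decorated strata classes $\iota_{\Gamma*}\bigl[\prod_v P(v)\bigr]$ with $P(v)$ a $\psi$-$\kappa$ monomial at each vertex; this includes the \emph{trivial} graph, i.e.\ pure interior classes such as $\psi_1^{3g-2}\in R_0(\oM_{g,1})$ or $\kappa_{3g-3}\in R_0(\oM_g)$. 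These are not push-forwards from any smaller moduli space, so your induction on dimension has nothing to induct from, and the entire content of the proposition is to show that such a top-degree interior monomial is supported on the boundary. The comparison formula $\psi_i=p^*\psi_i+[D_{i,n+1}]$ cannot do this: it relates $\psi$ classes on $\oM_{g,n+1}$ and $\oM_{g,n}$, but the pulled-back term is again an interior class, and once you run out of markings to forget you are left with $\kappa$ classes on $\oM_g$ about which the formula says nothing. (Concretely, $\psi_1$ on $\oM_{1,1}$ equals $\tfrac{1}{24}$ of the boundary divisor, but this is a nontrivial relation in the tautological ring, not a consequence of any comparison formula.)

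What is actually needed---and what the paper invokes---is the Getzler--Ionel vanishing in the strong form of Proposition~2 of Faber--Pandharipande: a monomial in $\psi$ and $\kappa$ classes of degree at least $g(v)-\delta_{0,n(v)}$ at a positive-genus vertex is supported on the boundary. Combined with the stability inequality $g(v)-\delta_{0,n(v)}\leq 3g(v)-3+n(v)$, this forces every vertex of a graph supporting a nonzero $0$-dimensional generator to have genus $0$, after which the class factors through $\iota_*\colon A_0(\oM_{0,2g+n})\cong\mathbb{Q}\to R_0(\oM_{g,n})$. Your fallback suggestion of a ``moving-lemma/excess argument'' showing that tautological $0$-cycles vanish on the open stratum $\cM_{g,n}$ is precisely this deep vanishing theorem restated, not an elementary alternative to it. To repair the proof you must cite (or prove) an Ionel/Looijenga-type boundary-support statement; the rest of your reduction then goes through essentially as in the paper.
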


Since the proof is so short (and depends only upon structural properties
of tautological classes), we present the argument here.\footnote{We follow the path of the proof 
\cite{faberpandharipande,Looj}.
See  \cite[Section 4]{faberpandharipande} and
\cite[Section 5.1]{Looj}.}
Consider 
the moduli space $\oM_{0,2g+n}$ together with the boundary
morphism
$$\iota: \oM_{0,2g+n} \rightarrow \oM_{g,n}$$
defined by pairing the first $2g$ markings to create $g$ nodes.
Since $\oM_{0,2g+n}$ is a rational variety,
$$R_0(\oM_{0,2g+n})\, =\, A_0(\oM_{0,2g+n})\,  
\stackrel{\sim}{=}\,  \mathbb{Q}\, .$$
Therefore, all the moduli points in the image of $\iota$ are
tautological and span a $\mathbb{Q}$-subspace of $R_0(\oM_{g,n})$ of rank 1.
We will prove that the span equals $R_0(\oM_{g,n})$.

Using the additive
generators of the tautological ring constructed in 
\cite[Appendix]{graberpand}, we need
only consider 
$0$-cycles  on $\oM_{g,n}$ which are of a special form. 
The strata of $\oM_{g,n}$ are 
indexed by stable graphs $\Gamma$ of genus $g$ with $n$ markings,
$$\iota_\Gamma : \oM_\Gamma \rightarrow \oM_{g,n}\, .$$
We need only consider $0$-cycles
\begin{equation}\label{h234}
\iota_{\Gamma*} \left[ \prod_{v\in \text{Vert}(\Gamma)} P(v)\right]\,\in R_0(\oM_{g,n})\, ,
\end{equation}
where $P(v)$ is a monomial in $\psi$ and $\kappa$ classes on the 
moduli space $\oM_{g(v),n(v)}$ associated to the vertex $v$. 
Let $\text{deg}(P(v))$ be the degree of the
vertex class.
Using the 
Getzler-Ionel vanishing in the strong form proven{\footnote{See \cite{claderetal} for a much more effective approach to the boundary terms than provided by the argument of  \cite{faberpandharipande}.}} in \cite[Proposition 2]{faberpandharipande},
we can impose the following additional restriction on \eqref{h234}: 
\begin{equation}
\label{d445}
 g(v)>0 \ \ \  \Rightarrow \ \    
\text{deg}(P(v)) <  g - \delta_{0,n(v)}\, .
\end{equation}

Suppose we have a vertex $v$ of $\Gamma$
with $g(v)>0$. Using the vertex stability
condition  $2g(v)-2+n(v)>0$, we deduce
$$
g(v)-\delta_{0,n(v)} \leq 3g(v)-3+n(v)\, .$$
But then we obtain
$$\text{deg}(P(v)) < 3g(v)-3+n(v)\, ,$$
which is impossible since \eqref{h234} is a $0$-cycle.
Therefore, we must have $g(v)=0$ for all $v\in \text{Vert}(\Gamma)$.

The $0$-cycle \eqref{h234} is now easily seen to be in the image of 
\begin{equation}\label{t889}
\iota_*: R_0(\oM_{0,2g+n}) \rightarrow R_0(\oM_{g,n})\, .
\end{equation}
We conclude that the push-forward \eqref{t889} is surjective. \qed

\subsection{Tautological points} \label{Sect:tautpoints}

Our central question here is how to decide whether a given
moduli point
$$[C,p_1,\ldots,p_n]\in \oM_{g,n}$$
determines a tautological $0$-cycle.

While our focus is on the geometry of $C$, there is an interesting connection to arithmetic: Bloch and Beilinson  have conjectured\footnote{See \cite{bbconj3,bbconj1} for the original papers by Bloch and Beilinson and \cite{bbconj2} for a detailed account. See 
\cite[Conjecture 9.12]{bbconj2} 
and the remark thereafter for the particular form of the conjecture
that we have used.} that for a nonsingular proper variety $X$ defined over $\overline{\mathbb{Q}}$, the complex Abel-Jacobi map
\[\Phi_{k,\mathbb{Q}}: A^k_{\mathrm{hom}}(X/\overline{\mathbb{Q}})_\mathbb{Q} \to J^k(X(\mathbb{C}))_{\mathbb{Q}} \]
to the intermediate Jacobian $J^k(X(\mathbb{C}))$ is injective (after tensoring with $\mathbb{Q}$). The map above factors through the usual Abel-Jacobi map of $X(\mathbb{C})$, and the image of  $A^k_{\mathrm{hom}}(X/\overline{\mathbb{Q}})_\mathbb{Q}$ in $A^k_{\mathrm{hom}}(X(\mathbb{C}))_\mathbb{Q}$ is the set of $k$-cycles in $X(\mathbb{C})$ defined over $\overline{\mathbb{Q}}$
which are homologous to $0$. If
the Bloch-Beilinson conjecture holds for
$$X=\oM_{g,n}\, ,$$  
the map
\[\Phi_{3g-3+n, \mathbb{Q}}: A_{0}(\oM_{g,n})_{\mathrm{hom}} \to \mathrm{Alb}(\oM_{g,n}) \otimes \mathbb{Q} \]
would be injective on the set of $0$-cycles defined over $\overline{\mathbb{Q}}$. But since $\oM_{g,n}$ is simply connected
\cite[Proposition 1.1]{BPik}, the Albanese variety is trivial. 
Since a tautological class in $A_{0}(\oM_{g,n})$ can be represented by a curve defined over $\overline{\mathbb{Q}}$, 
we would obtain the following consequence.

\begin{speculation}\label{ddd3}
If the pointed curve $(C,p_1,\ldots,p_n)$ is defined over $\overline{\mathbb{Q}}$,
then the associated moduli point in $A_0(\oM_{g,n})$ is tautological.
\end{speculation}

\noindent A first step in the study of  Speculation \ref{ddd3} is perhaps
 to use Belyi's Theorem to 
express the curve as a Hurwitz covering
$$C\rightarrow \mathbb{P}^1$$ ramified
only over 3 points of $\mathbb{P}^1$. Unfortunately, 
there has not been much progress in the direction of Speculation \ref{ddd3}. However, we will present a result about cyclic covers of $\mathbb{P}^1$ in Section \ref{Sect:cycliccovers}.

\subsection{Curves on surfaces}
Instead of studying the moduli points of special Hurwitz covers of
$\mathbb{P}^1$,
our main results here  concern the moduli points of curves on special {surfaces}.

\vspace{10pt}
\noindent{\bf Rational surfaces}
\vspace{10pt}

Let $S$ be a nonsingular projective rational surface over $\mathbb{C}$, and
let
$C\subset S$ be an irreducible nonsingular curve of genus $g$.
The {\em virtual dimension} in Gromov-Witten theory 
of the moduli space of stable maps $\oM_{g}(S,[C])$ 
is given
by the following formula
$$\mathrm{vdim}\,\oM_{g}(S,[C])= \int_{[C]} c_1(S) + g -1\,.$$
Our first result gives a criterion for curves on rational surfaces
in terms of the virtual dimension.

\begin{theorem} \label{curveratsur}
Let $C\subset S$ be an irreducible nonsingular curve
of genus $g$
on a nonsingular rational surface satisfying
$\int_{[C]} c_1(S)>0$.
 Let $p_1,\ldots, p_n\in C$
be distinct points. If
$$ n\leq \mathrm{vdim}\,\oM_{g}(S,[C])\, ,$$
then $[C,p_1,\ldots,p_n]\in \oM_{g,n}$ determines a tautological
$0$-cycle in $R_0(\oM_{g,n})$.
\end{theorem}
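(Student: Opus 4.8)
The plan is to show that $[C,p_1,\ldots,p_n]$, as a class in $A_0(\oM_{g,n})$, equals the moduli point of a stable $n$-pointed curve all of whose irreducible components have genus $0$. Any such point is tautological: if $\Gamma$ is its dual stable graph --- genus $g$, $n$ legs, every vertex of genus $0$ --- then the point class equals $\iota_{\Gamma*}\big(\prod_{v\in\text{Vert}(\Gamma)}[\mathrm{pt}_v]\big)$ for points $\mathrm{pt}_v\in\oM_{0,n(v)}$; since $[\mathrm{pt}_v]\in R_0(\oM_{0,n(v)})\cong\Q$ and $R^*$ is closed under push-forward by the gluing morphism $\iota_\Gamma$ (a composition of the boundary morphisms (ii) and (iii)), the class lies in $R_0(\oM_{g,n})$. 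So the real content is to degenerate $(C,p_1,\ldots,p_n)$ into such a configuration without losing control of its rational equivalence class.

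The Gromov--Witten dimension enters only through the following bound. As $S$ is rational we have $H^1(\cO_S)=0$, so the exact sequence $0\to\cO_S\to\cO_S(C)\to N_{C/S}\to 0$ gives $\dim|C|=h^0(N_{C/S})$, and Riemann--Roch on $C$ together with adjunction yields $h^0(N_{C/S})\ge\deg N_{C/S}-g+1=(2g-2+\int_{[C]}c_1(S))-g+1=\mathrm{vdim}\,\oM_g(S,[C])\ge n$. Write $N:=\dim|C|\ge n$; only this inequality is actually used. Next, let $\mathcal C\subset|C|\times S$ be the universal curve, $\mathcal C^{[n]}$ its $n$-fold fibre product over $|C|$, and consider the rational map $\mathcal C^{[n]}\to\oM_{g,n}$ sending $(D,q_1,\ldots,q_n)$, with $D$ smooth and the $q_i$ distinct, to $[D,q_1,\ldots,q_n]$; its domain of definition is nonempty (as $C$ is smooth) and irreducible, hence its closure is a single component $Y$ of $\mathcal C^{[n]}$. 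Projecting $\mathcal C^{[n]}\to S^n$: the fibre over a general $(x_1,\ldots,x_n)$ is the linear subsystem of $|C|$ of divisors through the $x_i$, and since $N\ge n$ and $|C|$ is not composed with a pencil (which would force $C$ rational --- a trivial case --- or $\int_{[C]}c_1(S)\le 0$), this subsystem has the expected dimension $N-n$; thus $Y$ is birational to a projective bundle over an open subset of $S^n$, hence rational since $S$ is. Resolving the map on a smooth projective model of $Y$ and using that $A_0$ of a rational variety is $\Q$, we conclude that the class $[D,q_1,\ldots,q_n]$ is \emph{independent} of the smooth member $D\in|C|$ and of the distinct points $q_i$; in particular it equals $[C,p_1,\ldots,p_n]$, and, by properness of $\oM_{g,n}$, it also equals the class of the stable model of $(D_0,q_1,\ldots,q_n)$ for an \emph{arbitrary} member $D_0\in|C|$ with the $q_i$ distinct points of the smooth locus of $D_0$.

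It therefore suffices to produce one connected, reduced, nodal member $D_0\in|C|$ whose irreducible components are all rational curves: putting $p_1,\ldots,p_n$ on its smooth locus, the stabilization of $(D_0,p_1,\ldots,p_n)$ is then a moduli point of the type handled in the first paragraph, and we are done. This is where the hypothesis $\int_{[C]}c_1(S)>0$, i.e. $-K_S\cdot C>0$, together with the classification of rational surfaces, is used. On the minimal rational surfaces the decomposition is explicit (on $\mathbb{P}^2$ take $\deg C$ general lines; on $\mathbb{F}_e$ take $C_0$, enough general fibres, and $a-1$ general rational sections when $[C]=aC_0+bf$), and the arithmetic genus of $D_0$ is automatically that of $C$ since $D_0\in|C|$. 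For a general rational surface one writes $[C]$ as a non-negative integral combination of classes of irreducible rational curves by repeatedly replacing a reducible member of a linear system (starting from $|C|$) by its components and recursing --- $-K_S\cdot C>0$ being what keeps one in the range where this is possible --- and then chooses the representatives in general position, adjusting them to keep the total divisor connected and reduced.

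The step I expect to be the crux is this last one: showing uniformly, for all nonsingular rational surfaces with $-K_S\cdot C>0$, that $|C|$ contains a connected reduced nodal curve with only rational components. The difficulty is concentrated in the blown-up cases --- in particular del Pezzo surfaces of small anticanonical degree, where $|C|$ may be small and special --- where one must argue carefully that the decomposition of $[C]$ into rational-curve classes, together with any multiplicities appearing in it, can be realized by an honest reduced, connected, nodal configuration; on minimal rational surfaces $-K_S\cdot C>0$ is nearly automatic and the decomposition is immediate. The remaining ingredients --- the Riemann--Roch bound, the rationality of $\mathcal C^{[n]}$, and the tautologicality of moduli points with only genus-$0$ components --- are routine.
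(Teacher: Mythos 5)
Your reduction steps are sound as far as they go: the Riemann--Roch computation showing $\dim|C|=\mathrm{vdim}\,\oM_g(S,[C])\geq n$, the rationality of the incidence variety $Y$ over $S^n$ (hence $A_0$ of a resolution is $\Q$), and the fact that a stable curve whose dual graph has only genus-$0$ vertices gives a tautological point are all correct, and together they do show that $[C,p_1,\ldots,p_n]$ equals the class of the stable limit of any degeneration inside $|C|$. But there is a genuine gap exactly where you locate ``the crux'': you never prove that $|C|$ contains a connected, reduced, nodal member all of whose components are rational. This is a Severi-type nonemptiness statement, and the hypothesis $\int_{[C]}c_1(S)>0$ does not yield it by the recursion you sketch. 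Replacing a reducible member of a linear system by its components terminates at irreducible curves, which need not be rational; nothing forces the process to reach genus $0$, and positivity of $-K_S\cdot C$ is not ``what keeps one in the range where this is possible.'' On non-del-Pezzo rational surfaces (blow-ups of $\mathbb{P}^2$ at ten or more points, say) the effective cone is not finitely generated, a decomposition of $[C]$ into rational classes may require multiplicities or rigid $(-1)$-curves meeting non-transversally, and the expected dimension $\int_{[C]}c_1(S)-1\geq 0$ of the locus of rational members of $|C|$ does not by itself give nonemptiness. The paper itself treats the analogous nonemptiness of Severi varieties on Enriques surfaces as an open problem, which indicates that this step cannot be taken for granted.

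The paper's proof is built to avoid precisely this existence question. Lemma \ref{Lem:clmorph} identifies $[C,p_1,\ldots,p_n]$ with the push-forward under $\epsilon$ of the \emph{virtual} fundamental class of $\oM_{g,n}(S,[C])$ capped with $c^*H^{\dim|L|-n}\prod_i\mathrm{ev}_i^*[p_i]$; the positivity hypothesis enters only to show $h^1(L)=0$, so that $\dim|L|$ has the expected value and the virtual class restricts to the honest fundamental class near the point $[C\hookrightarrow S]$. One then deforms $(S,L)$ to a toric pair over a rationally connected base --- deformation invariance of the virtual class plays the role of your degeneration inside a fixed linear system --- and applies the virtual localization formula, whose fixed-locus contributions are manifestly tautological. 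No actual rational or degenerate curve on $S$ ever needs to exist. To salvage your approach you would have to either prove the Severi-type existence statement in the required generality (known for $\mathbb{P}^2$ and del Pezzo surfaces, not for arbitrary rational surfaces with $-K_S\cdot C>0$) or import the virtual class machinery as the paper does.
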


For Theorem \ref{curveratsur}, we always assume $(g,n)$
is in the stable range
$$2g-2+n>0\, .$$
If positivity 
\begin{equation}\label{ff99ff}
\int_{[C]} c_1(S)>0
\end{equation}
holds,
then Theorem \ref{curveratsur} can be applied
with $n=0$ to obtain
$$[C] \in R_0(\oM_g)\, .$$
In case $S$ is toric, positivity \eqref{ff99ff}
always holds for nonsingular curves of genus $g\geq 1$ since
there exists an effective toric anticanonical divisor with affine
complement. Whether positivity  \eqref{ff99ff}
can be avoided in Theorem \ref{curveratsur} is an interesting 
question.{\footnote{The issue is not unrelated to the Harbourne-Hartshorne conjecture
and 
 will be discussed in Section \ref{vvvv}.}}

As an example, consider a nonsingular curve
 $$C_4\subset \mathbb{P}^1\times \mathbb{P}^1\, $$
 of
genus 4 and bidegree $(3,3)$. Positivity \eqref{ff99ff} holds, and
the virtual dimension here is 15,
so all moduli points
$$[C_4,p_1,\ldots,p_{15}] \in \oM_{4,15}$$
are tautological. 
Since the general curve of genus $4$ is of the form $C_4$,
but not all points
of
$\oM_{4,16}$ are expected to be tautological, the virtual dimension bound on $n$ in Theorem \ref{curveratsur}
should not have room for improvement here.

\vspace{10pt}
\noindent{\bf $K3$ surfaces}
\vspace{10pt}

Let $S$ be a nonsingular projective $K3$ surface over $\mathbb{C}$.
Unlike the case of a rational surface, the Chow group $A_0(S,\mathbb{Z})$
of $0$-cycles 
of $S$ is very complicated. However, there is a beautiful 
rank 1 subspace 
$$\mathsf{BV}\subset A_0(S,\mathbb{Z})$$ 
spanned by points lying on rational curves of $S$.
Following \cite{bvclass}, define $p\in S$ to be a {\em Beauville-Voisin} point 
if 
$[p] \in \mathsf{BV}$.

Let
$C\subset S$ be an irreducible nonsingular curve of genus $g$.
The {\em virtual dimension} of the moduli space of stable maps $\oM_{g}(S,[C])$ 
is now
$$\mathrm{vdim}\,\oM_{g}(S,[C])
= g -1\,.$$
Important for us, however, will be the {\em reduced} virtual dimension
$g$.
Our second result gives a criterion for curves on $K3$ surfaces.

\begin{theorem} \label{curvek3}
Let $C\subset S$ be an irreducible nonsingular curve
of genus $g$
on a $K3$ surface. Let $$p_1,\ldots, p_n\in C$$
be distinct 
Beauville-Voisin points of $S$.  If $n\leq g$,
then $[C,p_1,\ldots,p_n]\in \oM_{g,n}$ determines a tautological
$0$-cycle in $R_0(\oM_{g,n})$.
\end{theorem}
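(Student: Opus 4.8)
The plan is to imitate the strategy used for Proposition on $R_0$ and for Theorem \ref{curveratsur}: produce a curve in the boundary of $\oM_{g,n}$ that is rationally equivalent to $[C,p_1,\ldots,p_n]$ and lands in a rationally connected (indeed rational) boundary stratum, so that its class is forced to be the canonical rank $1$ tautological generator. The input that makes the $K3$ case work is the Beauville-Voisin theory: every $p_i$ lies on a rational curve $R_i\subset S$, and all such points are rationally equivalent in $A_0(S)$. First I would use the reduced virtual class on $\oM_g(S,[C])$ — the moduli space of genus $g$ stable maps in the class $[C]$ — which has reduced virtual dimension $g$, matched exactly by the bound $n\leq g$. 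Adding $n$ markings gives $\oM_{g,n}(S,[C])$ with reduced virtual dimension $g$ again after the obligatory bookkeeping, and evaluating at the markings gives a map $\mathrm{ev}=(\mathrm{ev}_1,\ldots,\mathrm{ev}_n):\oM_{g,n}(S,[C])\to S^n$ together with the forgetful/stabilization map $\pi:\oM_{g,n}(S,[C])\to\oM_{g,n}$.

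The key step is to intersect the reduced virtual class with the constraints $\mathrm{ev}_i^*[p_i]$. Because the $p_i$ are Beauville-Voisin points, $[p_i]\in A_0(S)$ is a multiple of the class of a point on any rational curve, and — crucially — the class $\mathrm{ev}_i^*[p_i]$ can be replaced (up to a nonzero rational scalar) by the pullback of the class of a point lying on a $(-2)$-curve or on any rational curve $R\subset S$. The locus in $\oM_{g,n}(S,[C])$ over which the $i$-th marking is constrained to map to such a point, cut against the reduced virtual class, is zero-dimensional; pushing this zero cycle forward under $\pi$ produces, on the one hand, a nonzero multiple of $[C,p_1,\ldots,p_n]$ (from the component where the stable map is the normalization of $C$ with markings at the $p_i$), and on the other hand a sum of contributions from stable maps with reducible or nodal domain. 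The point of the dimension count $n\leq g$ is that the main component contributes with the expected (nonzero) multiplicity and that the whole cycle is well-defined in $A_0$.

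The main obstacle — and where the real work lies — is controlling the boundary contributions: one must show that every other stable map appearing in the intersection has a domain that, after stabilization, lies in a boundary stratum $\oM_\Gamma$ with all vertices of genus $0$, or else has a positive-dimensional vertex class whose degree is too small to survive (exactly the mechanism \eqref{d445} exploits via Getzler-Ionel vanishing). A genus $g$ stable map hitting the point constraints either has irreducible domain isomorphic to $C$ (giving the desired point), or has a domain that degenerates; in the degenerate case the genus must "leak" onto components mapping into rational curves of $S$ or into points, and Beauville-Voisin again guarantees those components' moduli points are rational. Concretely I would argue that any positive-genus vertex $v$ of the stabilized graph must carry $\deg(P(v))<g(v)$ markings-worth of constraints, contradicting $2g(v)-2+n(v)>0$ unless $g(v)=0$, precisely as in the proof of the Proposition. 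Once every vertex is genus $0$, the stratum is rational, its $A_0$ is $\mathbb{Q}$, and the pushed-forward class is the canonical tautological generator; hence $[C,p_1,\ldots,p_n]$, being a nonzero multiple of it, is tautological. The delicate points are (a) the construction and deformation-invariance of the reduced virtual class in families, (b) the non-vanishing of the leading multiplicity, for which one uses that $C$ moves in a sufficiently large linear system on $S$ (or its deformations) so that the markings can be freely placed, and (c) verifying that the Beauville-Voisin relation is compatible with the excess-intersection corrections coming from the reduced obstruction theory.
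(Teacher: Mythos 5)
Your first step---expressing $[C,p_1,\ldots,p_n]$ as the pushforward of the reduced virtual class of $\oM_{g,n}(S,[C])$ cut by $c^*H^{g-n}$ and the evaluation constraints, and observing that the Beauville--Voisin hypothesis lets you move the point classes $[p_i]$ freely in $A_0(S)$---matches the paper (this is Lemma \ref{Lem:clmorphk3} and the remark immediately after equation \eqref{eqn:Chow2k3}). But the heart of your argument, the ``control of boundary contributions,'' does not work and is not where the proof lives. First, there are no boundary terms to cancel: Lemma \ref{Lem:clmorphk3} is proved by transversality at general points plus spreading out (Proposition \ref{Prop:spreadout}), and its conclusion is that the constrained intersection pushes forward to \emph{exactly} $[C,p_1,\ldots,p_n]$, not to a multiple of it plus corrections from degenerate stable maps. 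Second, the mechanism you invoke to kill positive-genus vertices---the degree bound \eqref{d445} coming from Getzler--Ionel vanishing---is a statement about tautological classes already written in the standard additive generators of $R^*(\oM_{g,n})$; it says nothing about which stable maps to $S$ satisfy geometric point constraints, so it cannot force the domains of ``other'' stable maps to have all genus-$0$ vertices. Third, the analogue of Step 3 of the rational-surface proof is unavailable here: a $K3$ surface carries no torus action, so virtual localization cannot be used to show the GW-type pushforward is tautological, and you never supply a substitute.

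The paper's actual mechanism is different and is the step your proposal is missing. Because the $p_i$ are Beauville--Voisin, the cycle in \eqref{eqn:Chow2k3} depends only on $S$ and on $c_1(L)\in\mathrm{Pic}(S)$, so it fits into a family of $0$-cycles over the moduli space $\mathcal{F}_d$ of quasi-polarized $K3$ surfaces (with the relative Beauville--Voisin class $\tfrac{1}{24}c_2(\Omega_\pi)$ replacing the $[p_i]$). By Proposition \ref{Prop:spreadout} it then suffices to treat a \emph{general} $(\widehat S,\widehat L_0)$. There one uses Chen's theorem to produce an irreducible nodal \emph{rational} curve $R$ in the linear system $|\widehat L_0^{\otimes k}|$, picks $n\le g\le 2g-2=\#(R\cap\widehat C)$ common points $q_i$, and connects $(\widehat C,q_1,\ldots,q_n)$ to $(R,q_1,\ldots,q_n)$ by the pencil spanned by $R$ and $\widehat C$; the latter moduli point lies in the image of $\oM_{0,n+2g}\to\oM_{g,n}$ and is therefore tautological. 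Your instinct that ``the genus must leak onto rational curves of $S$'' is pointing at the right geometry, but it has to be implemented by degenerating the curve inside its linear system on a general deformation of $S$, not by a graph-by-graph analysis of the virtual class on a fixed $S$.
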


For example, consider a nonsingular curve of genus $11$
\begin{equation}\label{k993}
(C_{11}, p_1,\ldots,p_{11})\subset S\, 
\end{equation}
in a primitive class on a $K3$ surface $S$ with 11 distinct points.
By Theorem \ref{curvek3}, 
$$[C_{11}, p_1,\ldots,p_{11}]\in R_0(\oM_{11,11})$$
in case {\em all} the points $p_i$ are Beauville-Voisin.
By the Mukai correspondence \cite{mukaicorr11}, 
we can obtain the general moduli point of $\oM_{11,11}$
by varying the
data \eqref{k993}
in the moduli space of polarized $K3$ surfaces of genus 11 with 11
points.
Since $\oM_{11,11}$ is of Kodaira dimension 19 by \cite[Theorem 5.1]{farkasgenus11},
the Chow group of $0$-cycles is expected (but not known) to
be complicated. In particular, the general moduli point
of $\oM_{11,11}$ is not expected to be tautological. The geometry
of $K3$ surfaces in genus 11 therefore suggests that a
condition on the points
 is necessary.

 The condition of Theorem \ref{curvek3}
exactly links the rank 1 Beauville-Voisin subspace  
$$\mathsf{BV}\subset A_0(S,\mathbb{Z})$$
to the rank 1 tautological subspace 
$$R_0(\oM_{g,n}) \subset A_0(\oM_{g,n})\, .$$

\vspace{5pt}
\noindent{\bf Other surfaces}
\vspace{10pt}

Since every nonsingular curve lies on a
nonsingular algebraic surface, results along the lines
of Theorems \ref{curveratsur} and \ref{curvek3} will always require special surface
geometries.
For nonsingular curves lying on 
Enriques and Abelian surfaces, we hope for results parallel to those in the 
rational and $K3$ surface cases. However,
the questions are, at the moment, open. 
For the Enriques surfaces, there is a clear path, but
the argument depends upon currently open questions about the nonemptiness 
of certain Severi varieties. For Abelian surfaces, the matter
appears more subtle (and there is no obvious line of argument that we
can see). 

For surfaces of general type, canonical curves play a
very special role from the perspective of Gromov-Witten and Seiberg-Witten
theories. A natural question to ask is whether a nonsingular
canonical curve on a surface of general type always determine
a tautological $0$-cycle. We expect new strategies will be required
to resolve such questions in the general type case.

\subsection{Further results on tautological $0$-cycles}
We have seen that a moduli point $$[C,p_1,\ldots, p_n]\in \oM_{g,n}$$
need not determine a tautological $0$-cycle. 
We can measure
{\em how far away from tautological} moduli points of $\oM_{g,n}$
are by considering sums. 
Let $$T(g,n)\in \mathbb{Z}_{>0}$$
 be the smallest number satisfying
the following condition:
{\em for every point $Q_1\in \oM_{g,n}$, there exist $T(g,n)-1$ other points
$Q_2,\ldots,Q_{T(g,n)}\in \oM_{g,n}$ which together have a tautological sum}
$$[Q_1]+[Q_2]+\cdots+[Q_{T(g,n)}] \in R_0(\oM_{g,n})\, .$$

An easy proof of the existence of $T(g,n)$ is given in 
Section \ref{sT}. Finding good bounds for $T(g,n)$ appears much harder.
Our main result here states that the growth 
of $T(g,n)$ for fixed $g$
as $n\rightarrow \infty$ is at most linear in $n$.
Can better asymptotics be found? For example, could $T(g,n)$
for fixed $g$ be bounded independent of $n$?

\subsection{$T$-numbers for $K3$ surfaces}
For comparison, we can consider the parallel question for a $K3$
surface $S$, namely: what is the smallest positive integer $T$ such that for any given $p\in S$  we find $q_2, \ldots, q_T \in S$ such that the sum
\[[p] + [q_2] + \cdots + [q_T] \in A_0(S, \mathbb{Z})\]
lies in the Beauville-Voisin subspace $\mathsf{BV} \subset A_0(S, \mathbb{Z})$?

On the one hand, we have $T \geq 2$, since $T=1$ would be the statement that for every $p \in S$ we have $[p] \in \mathsf{BV}$, a contradiction since $A_0(S, \mathbb{Z})$ is infinite-dimensional and spanned by the classes $[p]$.
On the other hand, since we
have 
families of elliptic curves which sweep out $S$,
the given point $p$ must lie on a (possibly singular)
genus 1 curve $E\subset S$.
Let $R\subset S$ be a rational curve in an ample class.
Since 
$$R\cap E \neq \emptyset\, ,$$
$E$ contains a Beauville-Voisin point $z\in E$. 
We can always solve
the equation
$$[p]+[q] = 2[z]\in A_0(E,\mathbb{Z})$$
for $q\in E$. We conclude that for any $p\in S$, there exists
a $q\in S$ satisfying
$$[p]+[q]\in \mathsf{BV}\, .$$
The $T$-number for $K3$ surfaces is therefore just 2. 

The Hilbert scheme $S^{[n]}$ of $n$ points
on 
$S$ also has a holomorphic form and
a distinguished 
Beauville-Voisin subspace in $A_0(S^{[n]}, \mathbb{Z})$. The holomorphic form shows that the
$T$-number of $S^{[n]}$ is greater than $1$. 
Using families of elliptic curves on $S$, the
$T$-number of $S^{[n]}$ is proven
to be at most $n+1$ in the
upcoming paper \cite{SchYin}, again a linear bound. Whether
the $T$-number is exactly $n+1$ is
an interesting question.

\subsection{Plan of the paper}
We start in Section \ref{bracc}  with basic results about cycles and curves
which we will use throughout the paper. Theorem  \ref{curveratsur} for rational surfaces
is proven in Section \ref{Sect:ratlsurf} and Theorem \ref{curvek3} 
for $K3$ surfaces
is proven in Section \ref{scurvek3}. 
Open questions
for Enriques surfaces, Abelian surfaces, and surfaces of general type 
are discussed in Section \ref{others}. A result concerning cyclic covers of $\mathbb{CP}^1$ is proven in Section \ref{Sect:cycliccovers}. The paper ends with results
about the number $T(g,n)$ in Section \ref{sT}.

\subsection{Acknowledgements}
We thank
C. Faber for contributing to our study of curves and
 G. Farkas for useful conversations about the
birational geometry of moduli spaces. We thank A. Knutsen for discussions about Severi varieties of Enriques surfaces.
Discussions with T. B\"ulles,
A. Kresch, D. Petersen, U. Riess, J. Shen, and Q. Yin
have played an important role. 
We thank the anonymous referee for many helpful comments, improving and
clarifying our exposition.
An early version of the results was
presented at the workshop {\em Hurwitz cycles on the moduli of 
curves} at Humboldt Universit\"at zu Berlin in February 2018.

\section{Basic results about cycles and curves}
\label{bracc}

We start by recalling the following useful (and well-known) result about 
families of algebraic cycles, see  \cite[Proposition 2.4]{voisinunirational}.

\begin{proposition} \label{Prop:spreadout}
Let $\pi : \mathcal{X} \to B$ be a flat morphism of algebraic varieties
 where $B$ is nonsingular of dimension $r$ and let $\mathcal{Z} \in 
A_N(\mathcal{X})$ be a cycle. Then, the set $B_Z$ of points $t \in B$ 
satisfying 
$$\mathcal{Z}_t = \mathcal{Z}|_{\mathcal{X}_t} = 0  \in 
A_{N-r}(\mathcal{X}_t)$$
 is a countable union of proper closed algebraic subsets of $B$.
\end{proposition}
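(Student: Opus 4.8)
The plan is to reduce to the following standard countability statement: for a family of cycles, the locus over which a fiber cycle is rationally equivalent to zero is a countable union of closed subsets. The mechanism is that rational equivalences are themselves parametrized by a countable family of varieties. Concretely, first I would recall that for any variety $Y$, a class in $A_{k}(Y)$ is zero if and only if the corresponding cycle is a finite sum $\sum_i \mathrm{div}(f_i)$ where each $f_i$ is a rational function on a $(k{+}1)$-dimensional subvariety $W_i \subset Y$. The data of the $W_i$ and the $f_i$ (of bounded degree, after fixing a projective embedding) lives in a countable union of Hilbert-type schemes, so the condition ``$\mathcal{Z}_t = 0$'' becomes a countable union of conditions each of which is closed (or at worst locally closed, which one then handles by taking closures and a further countable decomposition).

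Next I would make the relative version precise. Apply the above to the generic point: spread $\mathcal{X}\to B$ out, and consider over the function field $k(B)$ the cycle $\mathcal{Z}_{\eta}$ on the generic fiber $\mathcal{X}_\eta$. If $\mathcal{Z}_\eta$ vanishes, then it vanishes already over a dense open $U\subset B$ — indeed the rational equivalence witnessing $\mathcal{Z}_\eta = 0$ involves finitely many subvarieties and functions, all defined over some finitely generated extension, hence over the generic point of some dominant $B'\to B$, and by spreading out again over a dense open. This shows the vanishing locus $B_Z$ either contains a dense open, or is contained in a proper closed subset $B_1\subsetneq B$. In the first case we are done; in the second, stratify $B_1$ into nonsingular locally closed pieces, restrict $\pi$ and $\mathcal{Z}$ to each piece, and run Noetherian induction on $\dim B$. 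Since at each stage we add at most countably many closed subsets and the induction terminates, $B_Z$ is a countable union of proper closed subsets of $B$.

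The one point requiring care is the bookkeeping of codimension shifts. The hypothesis is that $\mathcal{Z}\in A_N(\mathcal{X})$ restricts on a fiber to a class in $A_{N-r}(\mathcal{X}_t)$ with $r = \dim B$; I would justify the restriction map (pullback by the regular embedding of a fiber, or, since fibers are Cartier-cut-out after localizing and $B$ is nonsingular, intersection with $r$ general hyperplane classes pulled back from $B$) and check it is compatible with the countable decomposition above. One must also treat fibers that are not irreducible or not reduced uniformly — this is harmless because everything is phrased in terms of the Chow groups $A_*(\mathcal{X}_t)$ of the schematic fibers, and the generic-point argument only ever uses that $B$ is nonsingular (so that the generic fiber governs the behavior over a dense open).

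The main obstacle, and the only genuinely non-formal input, is the countability of the parameter space for rational equivalences: one must invoke that over $\overline{\mathbb{Q}}$ — or over any countable field, which is enough after a further spreading-out argument or simply by noting $B$ and $\mathcal{Z}$ are defined over a countable field — the relevant Hilbert schemes and Chow varieties have only countably many $\overline{\mathbb{Q}}$-points, and hence the decomposition is into countably many pieces. This is exactly the content of \cite[Proposition 2.4]{voisinunirational}, and I would cite it for this step rather than reprove it.
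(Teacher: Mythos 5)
The paper does not actually prove Proposition \ref{Prop:spreadout}: it is quoted as a well-known fact with a pointer to \cite[Proposition 2.4]{voisinunirational}, which is the same reference you fall back on at the end, so there is no in-paper argument to compare yours against beyond the citation itself. Your sketch is the standard proof of the cited result, and the overall architecture is right: parametrize rational equivalences $N\mathcal{Z}_t=\sum_i \mathrm{div}(f_i)$ by countably many Hilbert/Chow-type schemes over $B$ (defined over a countable field, whence countably many components), and spread out from generic points. The one step that is genuinely too quick is the passage from ``countable union of constructible sets'' to ``countable union of closed sets''. ``Taking closures and a further countable decomposition'' is not an argument, and the Noetherian induction of your second paragraph terminates with locally closed pieces $U\subset T$; since a dense open subset of a positive-dimensional complex variety is \emph{not} a countable union of closed subsets, ``in the first case we are done'' is false as stated. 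What closes this gap is the specialization property of rational equivalence (Fulton, Chapter 20): if the generic point of an irreducible closed subset $T\subset B$ lies in $B_Z$, then all of $T$ does, because for $t\in T$ the class $\mathcal{Z}_t$ (defined via the refined Gysin map, using that $B$ is nonsingular) is a specialization of $\mathcal{Z}_{\eta_T}$; equivalently, one arranges the parameter spaces for bounded-degree rational equivalences to be proper over $B$, so that their images are genuinely closed. With that ingredient made explicit --- or simply with the citation you already give carrying this weight, exactly as the paper does --- the argument is complete; note also that the conclusion ``proper closed subsets'' implicitly assumes $\mathcal{Z}_\eta\neq 0$, the case $\mathcal{Z}_\eta=0$ giving $B_Z=B$ by the same specialization argument.
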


\vspace{10pt}

\begin{proposition}\label{Lem:generic}
 Let $X \subset \oM_{g,n}$ be an irreducible algebraic set such that the generic point of $X$ is tautological. Then, every point of $X$ is tautological.
\end{proposition}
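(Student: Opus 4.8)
The plan is to reduce the statement to an application of Proposition \ref{Prop:spreadout} together with the structural fact (Proposition \ref{grabervakiltaut}, i.e. $R_0(\oM_{g,n}) \cong \mathbb{Q}$) that all tautological $0$-cycles are rationally equivalent. The key observation is that ``being tautological'' for a point of $\oM_{g,n}$ is the condition that the difference $[Q] - \tau$ vanishes in $A_0(\oM_{g,n})$, where $\tau \in R_0(\oM_{g,n})$ is a fixed generator of the rank $1$ tautological $0$-cycle group; since $R_0 \cong \mathbb{Q}$, the point $Q$ is tautological if and only if $[Q]$ equals (a rational multiple, but here degree considerations fix it to be) $\tau$ in $A_0$. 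So we want to show that the locus of $t \in X$ with $[t] = \tau$ in $A_0(\oM_{g,n})$ is all of $X$ once it contains the generic point.

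\textbf{Setup via spreading out.} First I would pass to a nonsingular model: let $\widetilde{X} \to X$ be a resolution of singularities (or simply work with a nonsingular dense open $X^\circ \subset X$, which suffices since every point of $X$ lies in the closure of $X^\circ$ and we can iterate on the boundary $X \setminus X^\circ$, which has strictly smaller dimension — a Noetherian induction). Over $\widetilde X$ we have the tautological family $\mathcal{X} = \widetilde X \times \oM_{g,n} \to \widetilde X$ and inside it two cycles of relative dimension $0$: the diagonal-type cycle $\Delta$ whose fiber over $t$ is the point $\iota_X(t) \in \oM_{g,n}$ (the graph of the inclusion $\widetilde X \to \oM_{g,n}$), and the constant cycle $\widetilde X \times \tau$. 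Let $\mathcal{Z} = \Delta - \widetilde X \times \tau \in A_N(\mathcal{X})$ with $N = \dim \widetilde X$. Proposition \ref{Prop:spreadout} then says that the set of $t \in \widetilde X$ with $\mathcal{Z}_t = [\iota_X(t)] - \tau = 0$ in $A_0(\oM_{g,n})$ is a countable union of proper closed subsets of $\widetilde X$ — unless it is all of $\widetilde X$.

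\textbf{Using the generic point hypothesis.} The hypothesis that the generic point of $X$ is tautological means precisely that the generic fiber of $\mathcal{Z}$ vanishes. One must be slightly careful here: the generic point is a point of the scheme $X$ in the sense of the function field, not a very general complex point. But the vanishing of $\mathcal{Z}$ at the scheme-theoretic generic point of $\widetilde X$ is equivalent, by a standard spreading-out/limit argument (or directly from Proposition \ref{Prop:spreadout}: the ``bad'' locus $\widetilde X_Z$ is a countable union of proper closed subsets, so its complement is nonempty and dense, hence the generic point lies in the good locus; conversely if the generic point is good then the good locus is dense, but the good locus is exactly $\widetilde X \setminus \widetilde X_Z$, and a countable union of proper closed subsets cannot be dense in a variety over $\mathbb{C}$ — here we use that $\mathbb{C}$ is uncountable), to the statement that $\widetilde X_Z = \widetilde X$, i.e. $\mathcal{Z}_t = 0$ for \emph{every} $t \in \widetilde X(\mathbb{C})$. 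Unwinding, $[\iota_X(t)] = \tau$ in $A_0(\oM_{g,n})$ for all $t$, so every point of $X^\circ$ is tautological; the Noetherian induction on the lower-dimensional boundary strata then finishes the general point of $X$.

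\textbf{Main obstacle.} The genuinely delicate point is the interplay between ``the generic point is tautological'' (a statement about the function field, a priori) and the uncountability argument needed to conclude vanishing everywhere — that is, making sure the hypothesis as stated (generic point of the scheme $X$) is exactly what feeds Proposition \ref{Prop:spreadout}. Concretely, one needs: if $\mathcal{Z}$ vanishes at the generic point then $\widetilde X_Z$, which is a countable union of proper closed subsets, actually contains a dense open, forcing $\widetilde X_Z = \widetilde X$ since a variety over an uncountable field is not a countable union of proper closed subsets. This is standard but is the load-bearing input; everything else (resolution, constructing the family of cycles, the Noetherian induction to get from the dense open to all of $X$) is routine.
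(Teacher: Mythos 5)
Your argument is correct and is essentially the paper's: both proofs spread out the $0$-cycle $[\mathrm{pt}]-\tau$ (graph minus a constant tautological section) over a nonsingular base via Proposition \ref{Prop:spreadout}, exhibit the tautological locus as a countable union of closed algebraic sets, and conclude from irreducibility of $X$ (plus uncountability of $\mathbb{C}$, exactly the load-bearing point you identify) that a union containing the generic point must contain all of $X$. The paper avoids your resolution-of-singularities/Noetherian-induction overhead by taking the base of the family to be the nonsingular ambient space $\oM_{g,n}$ itself, with $\mathcal{Z}=\Delta-S$ on $\oM_{g,n}\times\oM_{g,n}$, and then simply intersecting the resulting countable union with $X$ -- a simplification worth adopting, since your ``nonsingular dense open $X^\circ$ plus induction on $X\setminus X^\circ$'' alternative would need the generic points of the boundary components to be tautological (not given by the hypothesis), so of your two options only the resolution route is complete as stated.
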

\proof
Consider the trivial family
$$ \pi: \oM_{g,n} \times \oM_{g,n} \rightarrow \oM_{g,n}$$
defined by projection on the second factor.
Let $\Delta \subset \oM_{g,n} \times \oM_{g,n}$ be the diagonal,
and let $S$ be the section of $\pi$ determined by a fixed
tautological point of $\oM_{g,n}$.
By applying{\footnote{We leave the standard
movement  of scheme results to stacks for the reader.}} Proposition \ref{Prop:spreadout} to 
the relative $0$-cycle
$$\mathcal{Z} = \Delta -  S\, ,$$
the set of points in $\overline{M}_{g,n}$ whose class is tautological is a countable union of closed algebraic sets. Since the generic point of $X$ is contained in this union, $X$ must also be  contained.
\qed

\vspace{10pt}

Let $S$ be a nonsingular projective surface which is either rational or $K3$. 
In both cases,
$$  \mathrm{Pic}(S) = H_2(S,\mathbb{Z})\, .$$
Let $L \in \mathrm{Pic}(S)$ be an effective divisor class. 
Let $|L| = \mathbb{P}(H^0(S,L))$ be the associated
linear system of divisors with hyperplane class
$H \in A^1(|L|)$. There exists a natural Hilbert-Chow morphism
\begin{align} \label{eqn:classmorphism}
 c: \oM_{g,n}(S,c_1(L)) &\to |L|\, ,
\end{align}
sending a stable map $ (f:(C,p_1, \ldots, p_n) \to S)$ to the effective divisor $ f_* [C]$. 

In the stable range $2g-2+n>0$,
let $$\epsilon: \oM_{g,n}(S,c_1(L)) \to \oM_{g,n}$$ be the natural forgetful morphism. Let 
$$\mathrm{ev}_i : \oM_{g,n}(S,c_1(L)) \to S$$ be the evaluation map corresponding to the $i$th marking.

\begin{lemma}\label{Lem:clmorph}
 Let $S$ be a 
rational  surface with $L \in \mathrm{Pic}(S)$.
Let $C \subset S$ be a nonsingular irreducible
 curve of genus $g$ contained in $|L|$. Assume 
 \[\dim |L| = \mathrm{vdim}\,\oM_{g}(S,[C]) = g-1 + 
\int_{[C]} c_1(S) \, .\]
Then, for   $0 \leq n \leq \mathrm{vdim}\,\oM_{g}(S,[C])$ satisfying $2g-2+n>0$ 
 and pairwise distinct points
 $p_1, \ldots, p_n \in C$, we have 
 \begin{equation} \label{eqn:Chow1}
  \epsilon_* \left( 
c^* H^{\dim |L|-n} \cap \prod_{i=1}^n \mathrm{ev}_i^* [p_i] 
\cap [\oM_{g,n}(S,[C])]^{\mathrm{vir}} 
\right) = [C,p_1, \ldots, p_n] 
 \end{equation}
in $A_0(\oM_{g,n})$.
\end{lemma}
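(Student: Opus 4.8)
The idea is to identify the left-hand side of \eqref{eqn:Chow1} as a class supported at the single moduli point $[C,p_1,\ldots,p_n]$ and then to check the multiplicity is exactly $1$. First I would analyze the fiber of the Hilbert-Chow morphism $c$ over the point $[C] \in |L|$: since $C$ is irreducible and nonsingular, the only stable maps mapping to the divisor $C$ are (up to reparametrization) the normalization map $C \to S$ together with bubble configurations, but the latter lie in boundary strata and can be dealt with separately. The class $c^*H^{\dim|L| - n}$ cuts down to a codimension-$(\dim|L|-n)$ locus; intersecting further with $\prod \mathrm{ev}_i^*[p_i]$ imposes that the $i$th marked point maps to $p_i$, which is $n$ more conditions. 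Since $\dim|L| = \mathrm{vdim}$ and the virtual class has that dimension, the resulting intersection is a $0$-cycle, and I would argue that its support, after applying $\epsilon$, is concentrated at $[C,p_1,\ldots,p_n]$ (boundary and multiple-cover contributions being killed either by dimension or by the genericity of the hyperplanes representing $H$).

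**Key steps, in order.** (1) Use Proposition \ref{Prop:spreadout} / a Bertini-type argument to choose the $\dim|L|-n$ hyperplanes in $|L|$ representing $H$ to be generic, so that $c^*H^{\dim|L|-n}$ is represented by the substack of maps whose image divisor lies in a generic linear subspace $\Lambda \subset |L|$ of the right dimension; arrange that $[C] \in \Lambda$ is an isolated, reduced point of $\Lambda \cap (\text{image of } c)$ among the locus of integral curves. (2) Show that over this configuration the relevant contributions to the virtual class come only from the main component parametrizing maps from smooth genus-$g$ curves birational onto their image, using the assumption $\dim|L| = \mathrm{vdim}$, which forces the expected and actual dimensions to match so that $\oM_g(S,[C])$ is (at the generic such curve) unobstructed of the expected dimension and $[\oM_{g,n}(S,[C])]^{\mathrm{vir}}$ agrees with the fundamental class there. (3) Identify the resulting $0$-dimensional intersection: the condition that the marked points map to the prescribed $p_i \in C$ pins down the map to be the inclusion $C \hookrightarrow S$ with markings $p_1,\ldots,p_n$, giving a single reduced point. (4) Compute its image under $\epsilon$ as $[C,p_1,\ldots,p_n] \in A_0(\oM_{g,n})$ with multiplicity one, checking that the automorphism factors and the transversality of the marked-point conditions on $C$ (which are distinct points on a curve with no infinitesimal automorphisms in the stable range) contribute trivially.

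**Main obstacle.** The delicate point is controlling the contributions of the \emph{other} components of $\oM_{g,n}(S,[C])$ — in particular maps from nodal domains, maps with contracted components, and (if $[C]$ is non-primitive) multiple covers — and showing that after cutting by $c^*H^{\dim|L|-n} \cap \prod \mathrm{ev}_i^*[p_i]$ these either miss the relevant locus or are killed by dimension. Since we are intersecting with the \emph{virtual} class rather than an honest fundamental class, one cannot argue purely dimensionally on the coarse space; instead I would localize the computation near the point $[C,p_1,\ldots,p_n] \in \oM_{g,n}$, use that $\epsilon$ is proper, and invoke the dimension hypothesis $\dim|L| = \mathrm{vdim}\,\oM_g(S,[C])$ together with excess-intersection bookkeeping to conclude that only the distinguished point survives with multiplicity $1$. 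I expect the hypothesis $\int_{[C]}c_1(S) \geq 0$ (equality of $\dim|L|$ with $\mathrm{vdim}$) to be precisely what makes the virtual class locally honest near $C$, so that this step goes through.
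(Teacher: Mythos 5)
Your argument for the case of \emph{general} points $p_1,\ldots,p_n\in C$ is essentially the paper's: cut down by linear conditions in $|L|$ and by the evaluation conditions, observe that the support is the single point $[(C,p_1,\ldots,p_n)\hookrightarrow S]$, and check that near this point the moduli space is the smooth incidence variety
$\mathcal{I}=\{(D,q_1,\ldots,q_n): D\in|L|,\ q_i\in D\}$
of the expected dimension $\dim|L|+n$, so the virtual class is locally the honest fundamental class and the intersection is transverse with multiplicity one. (The paper sidesteps your ``main obstacle'' more cleanly than you do: instead of generic hyperplanes plus excess-intersection bookkeeping, it takes $H_1\subset|L|$ to be the codimension-$n$ subspace of divisors through the $p_i$ and chooses $H_2$ representing $H^{\dim|L|-n}$ to be a \emph{complementary} linear subspace with $H_1\cap H_2=\{[C]\}$; then the support of $c^*[H_2]\cap\prod_i\mathrm{ev}_i^*[p_i]$ is literally one point and no other components of $\oM_{g,n}(S,[C])$ ever enter.)

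The genuine gap is that the lemma is asserted for \emph{all} pairwise distinct points, and your argument does not cover special configurations. If the $p_i$ fail to impose independent conditions on $|L|$, the locus $H_1$ of divisors through them has codimension $<n$, and then $c^*H^{\dim|L|-n}\cap\prod_i\mathrm{ev}_i^*[p_i]$ is an excess intersection: a generic codimension-$(\dim|L|-n)$ linear space meets $H_1$ in a positive-dimensional set, your step (3) ``pins down the map to be the inclusion'' fails, and the $0$-cycle is not visibly supported at the distinguished point. The paper closes this gap with Proposition \ref{Prop:spreadout}: let $B=C^n\setminus\Delta$, and let $\mathcal{Z}\subset\oM_{g,n}\times B$ be the difference of the two sides of \eqref{eqn:Chow1} as a family of $0$-cycles over $B$. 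The general-position case shows $\mathcal{Z}_b=0$ for general $b\in B$; since the locus where $\mathcal{Z}_b=0$ is a countable union of closed algebraic subsets of the irreducible variety $B$, it must be all of $B$. Without this specialization step (or some substitute), your proof establishes the formula only for general $(p_1,\ldots,p_n)$, which is not enough for the applications (e.g.\ Lemma \ref{Lem:clmorphk3} and Theorem \ref{curvek3} need it at prescribed Beauville--Voisin points).
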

\proof
 We first  prove the Lemma for general points 
$$p_1, \ldots, p_n \in C\, .$$
 For general points $p_i$, the set of curves in $|L|$ 
passing through the $p_i$ is a linear subspace $H_1$ of codimension $n$. 
We choose a complementary linear subspace $H_2 \subset |L|$ of codimension $r-n$ 
satisfying 
$$H_1 \cap H_2 = \{[C]\}\, .$$
Therefore,  on $\oM_{g,n}(S,[C])$, the cycle $c^* [H_2] \cap \prod_{i=1}^n \mathrm{ev}_i^* [p_i]$ is supported on the point 
\begin{equation}\label{wwppww}
[(C,p_1, \ldots, p_n) \hookrightarrow S] \in \oM_{g,n}(S,[C])
\end{equation}

Near the point \eqref{wwppww} in
 $\oM_{g,n}(S,[C])$, the map $\Phi=(c,\mathrm{ev}_1, \ldots, \mathrm{ev}_n)$ defines a local isomorphism{\footnote{Since all the curves
$D$ near $C$ are irreducible and nonsingular, the inverse map is
well-defined.}} 
to the incidence variety
 \[\mathcal{I} = \big\{(D,q_1, \ldots, q_n) : D \in |L|\, , q_1, \ldots, q_n \in D\big\} \subset |L| \times S^n.\]
 Since near \eqref{wwppww} $\mathcal{I}$ is nonsingular
 of dimension $\dim\,|L|+n$ and since this is
the virtual dimension of $\oM_{g,n}(S,[C])$,  
the virtual fundamental class
restricts to the standard fundamental class near \eqref{wwppww}.
Since $H_2 \times \prod_{i=1}^n [p_i]$ intersects $\mathcal{I}$ transversally in the point $([C],p_1, \ldots, p_n)$,  we obtain the equality
\eqref{eqn:Chow1}. 
 
 We finish the proof by going from the case of general points $p_1, \ldots, p_n \in C$ to the case of any pairwise distinct set of points. 
Consider the complement $B=C^n \setminus \Delta$ of the diagonals inside the product $C^n$. The difference of the two sides of equation (\ref{eqn:Chow1}) defines a natural cycle $\mathcal{Z}$ inside $\oM_{g,n} \times B$. 
For $b \in B$ general, we have $$\mathcal{Z}|_{\oM_{g,n} \times \{b\}}=0\, .$$ 
By Proposition \ref{Prop:spreadout}, the set of such $b$ is a countable union of closed algebraic sets, and so must be all of $B$.
\qed

\vspace{10pt}

For $S$ a nonsingular projective 
$K3$ surface, we need a variant of Lemma 
\ref{Lem:clmorph} involving the reduced virtual fundamental class (see \cite{bryanleung, maulikpandharipande}). 
\begin{lemma}\label{Lem:clmorphk3}
 Let $S$ be a $K3$ surface with $L\in \mathrm{Pic}(S)$.
Let $C \subset S$ be a nonsingular irreducible
 curve of genus $g$ contained in $|L|$. 
 Then for 
$$0 \leq n \leq g \ \ \text{satisfiying} \ \ 2g-2+n>0$$
 and distinct points $p_1, \ldots, p_n \in C$, we have 
 \begin{equation} \label{eqn:Chow1k3}
  \epsilon_* \left(  c^* H^{g-n} \cdot \prod_{i=1}^n \mathrm{ev}_i^* [p_i] 
\cap [\oM_{g,n}(S,[C])]^{\mathrm{red}}
\right) = [C,p_1, \ldots, p_n]
 \end{equation}
 in $A_0(\oM_{g,n})$. 
\end{lemma}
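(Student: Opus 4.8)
The strategy will be to mimic the proof of Lemma~\ref{Lem:clmorph}, replacing the ordinary virtual class by the reduced virtual class and tracking the one extra dimension that this produces. Recall that for a $K3$ surface the reduced virtual dimension of $\oM_{g,n}(S,[C])$ is $g+n$, which matches $\dim|L| + n = g + n$ since $\dim|L| = g$ for a primitive (or any) genus $g$ curve class on a $K3$ surface. So the dimensions line up exactly as in the rational case, and the class $c^*H^{g-n} \cdot \prod_i \mathrm{ev}_i^*[p_i] \cap [\oM_{g,n}(S,[C])]^{\mathrm{red}}$ is a $0$-cycle, as it must be for the statement to make sense.

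\textbf{Main steps.} First I would reduce, exactly as in Lemma~\ref{Lem:clmorph}, to the case of \emph{general} points $p_1,\ldots,p_n \in C$: the difference of the two sides of \eqref{eqn:Chow1k3} spreads out to a cycle $\mathcal{Z}$ on $\oM_{g,n} \times (C^n \setminus \Delta)$, and Proposition~\ref{Prop:spreadout} propagates vanishing from the general fiber to every fiber. Second, for general $p_i$, I would intersect $c^*$ of a generic linear subspace $H_2 \subset |L|$ of codimension $g-n$ with $\prod_i \mathrm{ev}_i^*[p_i]$ and argue that the resulting cycle is supported at the single stable map $[(C,p_1,\ldots,p_n)\hookrightarrow S]$: curves in $|L|$ through the $n$ general points form a codimension $n$ linear subspace $H_1$, and $H_1 \cap H_2$ can be arranged to be the single point $[C]$ (here one uses that $C$ is irreducible and nonsingular, so nearby divisors $D \in |L|$ are smooth irreducible curves and the incidence variety $\mathcal{I} = \{(D,q_1,\ldots,q_n)\}$ is smooth near this point). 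Third, and this is where the reduced class enters: I would argue that, in a neighborhood of this point, the reduced obstruction theory agrees with the deformation theory of $\mathcal{I}$, so that the reduced virtual class restricts to the ordinary fundamental class of the smooth variety $\mathcal{I}$ near $[(C,p_1,\ldots,p_n)]$. This is where one invokes the construction of the reduced class (cf.\ \cite{bryanleung,maulikpandharipande}): the $H^0(\omega_S) \cong \mathbb{C}$ factor that is removed from the obstruction space is exactly what makes the virtual dimension jump by one, and after removing it the deformation–obstruction complex computes the tangent space to the linear system and the positions of the marked points on the curve, i.e.\ $T\mathcal{I}$. Given that identification, transversality of $H_2 \times \prod_i[p_i]$ with $\mathcal{I}$ at the point $([C],p_1,\ldots,p_n)$ gives the equality of the pushforward with $[C,p_1,\ldots,p_n]$ in $A_0(\oM_{g,n})$, and finally one replaces $[H_2]$ by its rational-equivalence class $H^{g-n}$.

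\textbf{Main obstacle.} The essential point — and the only place this differs nontrivially from Lemma~\ref{Lem:clmorph} — is the third step: verifying that the reduced virtual class restricts to the ordinary fundamental class of $\mathcal{I}$ near the smooth point $[(C,p_1,\ldots,p_n)\hookrightarrow S]$. One needs to check that the curve $C$, being smooth and rigid in $S$ up to moving in its linear system (equivalently, that $h^1$ of the relevant twisted normal/deformation sheaf behaves as expected), is an unobstructed point of the \emph{reduced} theory, so that $\oM_{g,n}(S,[C])$ is smooth of the expected reduced dimension $g+n$ there and the reduced class is just $[\oM_{g,n}(S,[C])]$ locally. This requires a careful local analysis of the reduced obstruction theory (the cosection-localization / $T^*|L|$-twist picture), but is standard for primitive genus $g$ curve classes on $K3$ surfaces; I expect the referee-level difficulty to lie entirely in phrasing this local comparison cleanly rather than in any genuinely new idea. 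Once it is in place, the spreading-out and transversality arguments are identical to the rational-surface case.
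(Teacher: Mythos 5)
Your proposal is correct and follows essentially the same route as the paper: the paper computes $\dim|L|=g$ via the short exact sequence $0\to \mathcal{O}_S\to L\to \mathcal{O}_C(C)\to 0$ (using $h^1(S,\mathcal{O}_S)=0$ and $\mathcal{O}_C(C)\cong\omega_C$), notes that $\dim|L|+n=g+n$ equals the reduced virtual dimension, and then states that the proof of Lemma~\ref{Lem:clmorph} can be followed verbatim with the reduced class. The only difference is that you spell out the local comparison of the reduced obstruction theory with the tangent space of the incidence variety, which the paper leaves implicit in the phrase ``exactly followed for the reduced class''; this is a correct and welcome elaboration, not a divergence.
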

\proof
Since $L=\mathcal{O}_S(C)$, 
the exact sequence
$$ 0 \rightarrow H^0(S, \mathcal{O}_S) \rightarrow
H^0(S, \mathcal{O}_S(C)) \rightarrow H^0(C,\mathcal{O}_C(C)) \rightarrow
H^1(S,\mathcal{O}_S)\rightarrow \,\ldots $$
together with the ranks
$$h^0(C,\mathcal{O}_C(C))= h^0(C, \omega_C)=g\, , \ \ \ h^1(S,\mathcal{O}_S)=0$$
shows $\dim|L|=g$.
Hence, we have
 \[\dim\,|L|+n = g+n = \mathrm{dim}\,[\oM_{g,n}(S,c_1(L))]^{\mathrm{red}}\, .\] 
The proof of Lemma \ref{Lem:clmorph} can then be exactly followed
for the reduced class here to conclude the result.
\qed


\section{Rational surfaces}
\label{Sect:ratlsurf}

\subsection{Proof of Theorem \ref{curveratsur}}

If $C$ is of genus $g=0$, Theorem \ref{curveratsur}  
is trivial (since the moduli space $\oM_{0,n}$ is rational and
all $0$-cycles are tautological). We will assume $g\geq 1$.
The argument proceeds in three steps:
\begin{enumerate}
\item[(1)]
We apply Lemma \ref{Lem:clmorph} to express the 0-cycle
 $$[C,p_1, \ldots, p_n]\in A_0(\oM_{g,n})$$
 in terms of a push-forward involving the virtual fundamental class of 
$\oM_{g,n}(S,[C])$. 
\item[(2)]
We deform the rational surface $S$ to a nonsingular projective 
toric surface $\widehat{S}$ over a base which is rationally connected. 
\item[(3)] We apply virtual localization \cite{virtloc} to 
the toric surface $\widehat{S}$
  to conclude the  desired class is tautological.
\end{enumerate}
 
\noindent  \textbf{Step 1.} To apply Lemma \ref{Lem:clmorph}, we must
check the hypothesis
\begin{equation}\label{kk332}
 \dim |L|= g-1 + \int_{[C]} c_1(S)\, , 
\end{equation}
where $L=\mathcal{O}_S(C)$.
Condition \eqref{kk332}
 is equivalent to $h^0(L) = g+\int_{[C]} c_1(S)$.

 Since $C$ is nonsingular of genus $g$, the adjunction formula yields
\[ \langle[C], [C] - c_1(S)\rangle  = 2g-2\, ,\]
where $\langle,\rangle$ is the intersection product on $S$.
 On the other hand, by Riemann-Roch we have
 \begin{align*}
  \chi(L) &= \frac{1}{2} \langle [C], [C] + c_1(S) \rangle + \chi(\mathcal{O}_S)\\
  &= \frac{1}{2} \langle [C], [C] - c_1(S) \rangle + \langle [C], c_1(S) \rangle + 1\\
  &= g-1 + \langle [C], c_1(S) \rangle + 1\\& = g + \langle [C], c_1(S) 
\rangle\, .
 \end{align*}
Furthermore, we have
$$h^2(L)= h^2(S,\mathcal{O}_S(C))= h^0(S,\omega_S(-C))
\leq h^0(S,\omega_S)=0\, ,$$
where the last equality holds since $S$ is rational. 
So, we see 
$$h^0(L) \geq \chi(L)=g+\langle [C], c_1(S) \rangle\, .$$

To prove the vanishing of $h^1(L)$, we use the sequence
$$H^1(S, \mathcal{O}_S) \rightarrow H^1(S,\mathcal{O}_S(C)) \rightarrow
H^1(C,\mathcal{O}_C(C)) \rightarrow H^2(S,\mathcal{O}_S)\, .$$
Since the higher cohomologies of $\mathcal{O}_S$ on $S$
vanish,
$$ h^1(S,L)= h^1(C, \mathcal{O}_C(C))\, .$$
By Serre duality and adjunction, we have
$$h^1(C, \mathcal{O}_C(C))= h^0(C,\omega_C(-C)) =
h^0(C,\omega_S)\, .$$
However, by the positivity hypothesis, 
$$\langle [C], c_1(\omega_S) \rangle < 0\, ,$$
so $h^0(C,\omega_S)=0$. 
 
 Since the hypotheses  of Lemma \ref{Lem:clmorph} hold,
 we may apply the conclusion:
for $r=g-1 + \int_{[C]} c_1(S)$ and pairwise distinct $p_1, \ldots, p_r \in C$, we have 
 \begin{equation} \label{eqn:Chow2}
  \epsilon_* \left(  
\prod_{i=1}^r \mathrm{ev}_i^* [\mathrm{pt}] \cap
[\oM_{g,r}(S,c_1(L))]^{\mathrm{vir}}
 \right) = [C,p_1, \ldots, p_r] \in A_0(\oM_{g,r})\, ,
 \end{equation}
 where $[\mathrm{pt}] \in A_0(S,\mathbb{Z})$ is the class of (any) point
as  $S$ is rational.

\vspace{10pt}
\noindent \textbf{Step 2.} The rational surface $S$ can be deformed to a toric surface 
$\widehat{S}$ in a smooth family $$\mathcal{S} \to B$$ 
over a rationally connected variety $B$ containing $S, \widehat{S}$ as special fibres.{\footnote{There
is no difficultly in finding such a deformation. The minimal model of $S$
is toric. The exceptional divisors can then be moved to toric fixed points.}}
The line bundle $L$ can be deformed along with $S$ to a line bundle
$$\widehat{L} \rightarrow \widehat{S}\, .$$
Since the virtual fundmental class is constructed in families \cite{behrendfantechi},
 $$\epsilon_* \left(  
\prod_{i=1}^r \mathrm{ev}_i^* [\mathrm{pt}] \cap
[\oM_{g,r}(S,c_1(L))]^{\mathrm{vir}}
 \right) =
 \epsilon_* \left(  
\prod_{i=1}^r \mathrm{ev}_i^* [\mathrm{pt}] \cap
[\oM_{g,r}(\widehat{S},c_1(\widehat{L}))]^{\mathrm{vir}}
 \right)\, .$$
We have therefore moved the calculation to the toric setting.

\vspace{10pt}
\noindent
 \textbf{Step 3.} The
virtual localization formula of \cite{virtloc}
applied to the toric surface $\widehat{S}$ immediately shows
\[\epsilon_* \left( 
\prod_{i=1}^n \mathrm{ev}_i^* [\mathrm{pt}]
\cap 
[\oM_{g,r}(\widehat{S},c_1(\widehat{L}))]^{\mathrm{vir}}
   \right) \in R_0(\oM_{g,r})\, .\]

We have proven that the $0$-cycle $[C,p_1, \ldots, p_r] \in A_0(\oM_{g,r})$
is tautological. If $0\leq n\leq r$, 
$$[C,p_1, \ldots, p_n] \in A_0(\oM_{g,n})$$
must also be tautological (by applying the forgetful map). \qed

\subsection{Variations}\label{vvvv}
Let $S$ be a nonsingular projective rational surface, and let
$$C\subset S$$
be a reduced, 
irreducible, nodal curve of arithmetic genus $g$
satisfying the positivity condition 
\begin{equation}\label{fff56}
\int_{[C]} c_1(S) >0\,.
\end{equation}
The statements and proofs of Lemma \ref{Lem:clmorph} 
and Theorem \ref{curveratsur}
are still valid for such curves\footnote{The points $p_i$ here
are distinct and lie in the nonsingular locus of $C$.} : 
{\em the 0-cycle 
$$[C,p_1,\ldots, p_n] \in A_0(\oM_{g,n})$$
is tautological if $n\leq\mathrm{vdim}\,\oM_{g}(S,[C])$}.

Can the positivity condition \eqref{fff56} be relaxed?
Positivity was used in the proof of Theorem \ref{curveratsur}
only to prove that the associated linear series has the
expected dimension. 
If $C\subset S$ is an irreducible nodal curve of arithmetic genus $g$
satisfying
\begin{equation}\label{nnee3}
h^1(S,\mathcal{O}_S(C))=0\, ,
\end{equation}
then we can {\em still} conclude that 
the 0-cycle 
$$[C,p_1,\ldots, p_n] \in A_0(\oM_{g,n})$$
is tautological if $n\leq \mathrm{vdim}\,\oM_{g}(S,[C])$.

According to the Harbourne-Hirschowitz conjecture \cite{HH1,HH2},
the vanishing \eqref{nnee3} should always hold if
$S$ is sufficiently general. We therefore 
expect an affirmative answer to the following question.

\begin{question} Let 
$C\subset S$ be an irreducible nonsingular (or an irreducible nodal) 
curve  with {\em no}
positivity assumption on $\int_{[C]} c_1(S)$. Is 
the 0-cycle 
$$[C,p_1,\ldots, p_n] \in A_0(\oM_{g,n})$$
 tautological for $n\leq \mathrm{vdim}\,\oM_{g}(S,[C])$?
\end{question}

\vspace{5pt}
On the other hand, if
 $C\subset S$ is  a {\em reducible} nodal curve, 
we obtain a parallel statement by applying the results above for each irreducible component separately. Here, each component $C_v$ with arithmetic genus $g_v$ must satisfy the positivity condition \eqref{fff56}, and the number of markings plus the number of preimages of nodes must be bounded by the virtual dimension $\mathrm{vdim}\,\oM_{g_v}(S,[C_v])$.

\section{K3 surfaces}
\label{Sect:K3} \label{scurvek3}
\subsection{Beauville-Voisin classes}
On a nonsingular projective $K3$ surface $S$,
 there exists a canonical zero cycle $c_S \in A_0(S,\mathbb{Z})$ of degree $1$ 
satisfying the following three properties \cite{bvclass}:
\begin{itemize}
    \item all points in $S$ lying on a (possibly singular) rational curve have class $c_S \in A_0(S,\mathbb{Z})$,
    \item the image of the intersection product $A_1(S,\mathbb{Z}) 
\otimes A_1(S,\mathbb{Z}) \to A_0(S,\mathbb{Z})$ lies in $\mathbb{Z} \cdot c_S$,
    \item the second Chern class $c_2(S)$ is equal to $24 c_S$.
\end{itemize}
The {\em Beauville-Voisin} subspace is defined by
$$\mathsf{BV}= \mathbb{Z}\cdot c_s \subset A_0(S,\mathbb{Z})\, .$$
A point $p\in S$ is a {\em Beauville-Voisin} point if
$[p]\in \mathsf{BV}$.

\subsection{Proof of Theorem \ref{curvek3}}
The claim is trivial for genus $g=1$ 
since $\oM_{1,1}$ is rational.
We can therefore assume $g \geq 2$. By Lemma \ref{Lem:clmorphk3}, we have 
 \begin{equation} \label{eqn:Chow2k3}
  \epsilon_* \left(  c^* H^{g-n} \cap \prod_{i=1}^n \mathrm{ev}_i^* [p_i] 
\cap [\oM_{g,n}(S,[C])]^{\mathrm{red}}
\right) = [C,p_1, \ldots, p_n]
 \end{equation}
in $A_0(\oM_{g,n})$.

We briefly recall the notation used in \eqref{eqn:Chow2k3}.
For $L=\mathcal{O}_S(C)$,
 $$c: \oM_{g,n}(S,[C]) \to \mathbb{P}(H^0(S,L))$$
 is the map sending 
$$f:(\widehat{C},\widehat{p}_1, \ldots, \widehat{p}_n) \to S$$
to $f_*[\widehat{C}] \in |L|$ and 
$H$ is the hyperplane class of $|L|$.
Since the points $p_i$ are all Beauville-Voisin,
 equality (\ref{Lem:clmorphk3}) immediately implies that the right hand side
 depends {\em only}
  upon the surface $S$ and the class 
$$\left[\mathcal{O}_S(C)\right] \in \mathrm{Pic}(S)\, .$$

 By Lemma 2.3 of \cite[Chapter 2]{lectk3}, the line bundle $L$ 
is base point free and hence nef. 
Let $$L = L_0^{\otimes k}\, , \ \ \ k\geq 1$$ for 
$L_0$ primitive of degree 
$$d=2 g' - 2>0\, .$$
Then, $L_0$ is still nef, so $(S,L_0)$ is a quasi-polarized $K3$
 surface of degree $d$. Consider the moduli stack 
$\mathcal{F}_{d}$ of quasi-polarized $K3$ surfaces $(\widehat{S},\widehat{L}_0)$ of degree $d$. 
Let $$\pi: \mathcal{S} \to \mathcal{F}_d$$
 be the universal $K3$ surface over $\mathcal{F}_d$ with universal polarization $\mathcal{L}_0\in A^1(\mathcal{S})$. 
The restriction of $(\mathcal{S},\mathcal{L}_0)$ to the fibre 
over $(\widehat{S},\widehat{L}_0) \in \mathcal{F}_d$ is isomorphic to $(\widehat{S},\widehat{L}_0)$, 
see \cite{K3tautological}. 
 
 Consider
 furthermore the projective bundle
$$\mathcal{P} = \mathbb{P}(R^0 \pi_*((\mathcal{L}_0)^{\otimes k}))
\rightarrow  \mathcal{F}_d$$
 parametrizing elements in the linear system $(\mathcal{L}_0)^{\otimes k}$ on the fibres of $\mathcal{S}$. The projective bundle $\mathcal{P}$
is of fibre dimension $g$ by Theorem 1.8 of
\cite[Chapter 2]{lectk3}.
 
 We can then 
obtain the left hand side of \eqref{Lem:clmorphk3}
 as a fibre in a family of cycles parametrized by $\mathcal{F}_d$. 
Indeed, denote by $\mathcal{S}^n$ the $n$-fold self product of $\mathcal{S}$ 
over  $\mathcal{F}_d$ and consider the following commutative diagram:
 
 \begin{equation}
  \begin{tikzcd}
   & \oM_{g,n}(\pi,c_1(\mathcal{L}_0^k)) \arrow[r,"c\times \mathrm{ev}"]\arrow[dl,swap,"\epsilon"] \arrow[d,"\pi"] & \mathcal{P} \times_{\mathcal{F}_d} \mathcal{S}^n \arrow[dl]\\
   \oM_{g,n} & \mathcal{F}_d & 
  \end{tikzcd}.
 \end{equation}
 Here, $\oM_{g,n}(\pi,c_1(\mathcal{L}_0^k))$ 
is the moduli space of stable maps to the fibres of $\pi$ of curve class equal to $c_1(\mathcal{L}_0^k)$ on the fibres of $\pi$. 
The map $c$ is the version of the previous map $c$ in families, and 
$$\mathrm{ev}=(\mathrm{ev}_1, \ldots, \mathrm{ev}_n)$$
 is the evaluation map corresponding to the $n$ points. 
Let $$\mathcal{H}=c_1(\mathcal{O}_\mathcal{P}(1))$$ be the hyperplane class of the projective bundle $\mathcal{P}$, and 
let 
$$c_\mathcal{S} = \frac{1}{24} c_2( \Omega_\pi) \in A^2(\mathcal{S})$$
 be the relative Beauville-Voisin class of the family $$\pi : \mathcal{S} \to \mathcal{F}_d\, .$$
Consider the cycle $\mathcal{Z} \in  A^{3g-3+n}(\oM_{g,n} \times \mathcal{F}_d)$ defined by
 \begin{equation}
     \mathcal{Z} = (\epsilon,\pi)_* \left(
 c^* \mathcal{H}^{g-n} \cap \prod_{i=1}^n \mathrm{ev}_i^* c_\mathcal{S} 
\cap [\oM_{g,n}(\pi,c_1(\mathcal{L}_0^k)]^{\mathrm{red}}
\right)\, .
 \end{equation}
The fibre of $\mathcal{Z}$ over $(S,L_0)$ is equal to the left hand side of 
\eqref{Lem:clmorphk3}.

By Proposition \ref{Prop:spreadout}, we need only
show that the fibre of $\mathcal{Z}$ over the general point of $\mathcal{F}_d$ is tautological. 
So let 
$$(\widehat{S},\widehat{L}_0) \in \mathcal{F}_d$$
 be a general quasi-polarized $K3$ of degree $d$. By 
the existence result of \cite{chenk3}, 
the linear system $\big|\widehat{L}_0^{\otimes k}\big|$ contains an 
irreducible nodal rational curve 
$$R\subset \widehat{S}\, .$$ 
Furthermore, since $(\widehat{S},\widehat{L})$ 
is general, we can assume that $\widehat{L}_0$ 
and thus $\widehat{L}_0^{\otimes k}$ are basepoint free
 (see Theorem 4.2 of \cite[Chapter 2]{lectk3}).
By Bertini's theorem, the general member $\widehat{C}$ of
 the linear system $\big|\widehat{L}_0^{\otimes k}\big|$ 
intersects the rational curve $R$ only in reduced points. 
The number of these intersection points is exactly 
$$\langle [\widehat{C}], [\widehat{C}] \rangle = 2g-2\, ,$$ 
which is at least $g$
 (since we assume $g \geq 2$).
Choose distinct points 
$$q_1, \ldots, q_n \in R \cap \widehat{C}\, .$$
 Certainly all the $q_i$ are Beauville-Voisin points
since they lie on the rational curve $R$. 
Since $$R,\widehat{C} \in |\widehat{L}_0^k|\, ,$$ 
there exists a pencil of curves connecting $(\widehat{C},q_1, \ldots, q_n)$ and $(R,q_1, \ldots, q_n)$. 
The 0-cycle given by $[(R,q_1, \ldots, q_n)]\in A^0(\oM_{g,n})$ is clearly tautological, 
since the point lies in the image of
 $$\oM_{0,n+2g}\rightarrow \oM_{g,n}\, .$$
 Therefore, $[(\widehat{C},q_1, \ldots, q_n)]$ is tautological. \qed

\vspace{10pt}

We isolate part of the above proof as a separate corollary for
later application.

\begin{corollary} \label{Cor:k3augmented}
 Let $S$ be a $K3$ surface with $L\in \mathrm{Pic}(S)$.
There exists a $\mathbb{Q}$-linear map 
$$\Phi: A_0(S^n) \to A_0(\oM_{g,n})$$ 
defined by 
$$\Phi(\alpha) =\epsilon_* \left(  c^* H^{g-n} \cap  \mathrm{ev}^* \alpha\cap
[\oM_{g,n}(S,c_1(L))]^{\mathrm{red}}  \right)\, .$$
For an irreducible
 nonsingular projective curve $C \subset S$ of genus $g \geq 1$ in the
linear series $L$
and distinct points $p_1, \ldots, p_n \in C$ we have
 \begin{equation}
     [(C,p_1, \ldots, p_n)] = \Phi( [(p_1, \ldots, p_n)] ).
 \end{equation}
 Moreover,  $\Phi((c_S)^{\times n})$ is tautological.
\end{corollary}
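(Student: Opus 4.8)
The plan is to extract the construction directly from the proof of Theorem \ref{curvek3}, which already contains everything needed. First I would define $\Phi$ by the displayed formula and observe it is $\mathbb{Q}$-linear simply because push-forward, flat (or refined Gysin) pull-back, and capping with a fixed class are all linear operations on Chow groups; the reduced virtual class $[\oM_{g,n}(S,c_1(L))]^{\mathrm{red}}$ is a fixed element, so there is nothing subtle here beyond checking the operations are well-defined, which follows from \cite{bryanleung, maulikpandharipande}. Note that $\Phi$ lands in $A_0(\oM_{g,n})$ because the dimension count in Lemma \ref{Lem:clmorphk3} gives $\dim|L|+n = g+n = \dim[\oM_{g,n}(S,c_1(L))]^{\mathrm{red}}$, so after cutting by $H^{g-n}$ and an $n$-codimensional class on $S^n$ and pushing forward along $\epsilon$ we are left in dimension zero.

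Next, the identity $[(C,p_1,\ldots,p_n)] = \Phi([(p_1,\ldots,p_n)])$ is \emph{exactly} the content of Lemma \ref{Lem:clmorphk3}: taking $\alpha = [(p_1,\ldots,p_n)] = \prod_{i=1}^n [p_i]$ (viewed via the Künneth-type decomposition of a point class on $S^n$) turns the right-hand side of the definition of $\Phi$ into the left-hand side of \eqref{eqn:Chow1k3}. So this part requires no new argument — it is a restatement, valid for $g \geq 1$ in the stable range (the case $g=1,n=1$ being covered since $\oM_{1,1}$ is rational, as in the proof of Theorem \ref{curvek3}).

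Finally, for the tautology of $\Phi((c_S)^{\times n})$: since $c_S$ is the Beauville-Voisin class and $[(p_1,\ldots,p_n)] = (c_S)^{\times n}$ in $A_0(S^n)$ whenever each $p_i$ is a Beauville-Voisin point lying on a common rational curve, we may choose any irreducible nodal rational curve $R \subset S$ in the class $c_1(L)$ — or, if one does not exist on this particular $S$, deform within the moduli $\mathcal{F}_d$ of quasi-polarized $K3$ surfaces and invoke \cite{chenk3} together with Proposition \ref{Prop:spreadout}, exactly as in the proof of Theorem \ref{curvek3}. Picking $n$ distinct points $q_1,\ldots,q_n$ on $R$ (possible since $2g-2 \geq g$ for $g\geq 2$, and trivially for $g=1$), we get $[(q_1,\ldots,q_n)] = (c_S)^{\times n}$ by the first Beauville-Voisin property, and $[(R,q_1,\ldots,q_n)] = \Phi((c_S)^{\times n})$ lies in the image of $\oM_{0,n+2g} \to \oM_{g,n}$, hence is tautological. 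The one genuine point to be careful about — the main obstacle, such as it is — is the reduction via Proposition \ref{Prop:spreadout} when $S$ itself carries no suitable rational curve in $|L|$: one must spread the cycle $\Phi((c_S)^{\times n})$ out over $\mathcal{F}_d$ (using that $c_S$ extends to the relative class $c_\mathcal{S} = \frac{1}{24}c_2(\Omega_\pi)$) and check that the tautological locus in $A_0(\oM_{g,n})$ is a countable union of closed subsets, so that tautology at the general fibre forces it everywhere — but this is precisely the machinery already assembled in Section \ref{scurvek3}, so no new ideas are needed.
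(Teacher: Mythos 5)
Your treatment of the linearity of $\Phi$ and of the identity $[(C,p_1,\ldots,p_n)]=\Phi([(p_1,\ldots,p_n)])$ is correct and is exactly the paper's route: the corollary is explicitly isolated from the proof of Theorem \ref{curvek3}, and the identity is a restatement of Lemma \ref{Lem:clmorphk3}. (One small slip: the rationality of $\oM_{1,1}$ is irrelevant to the identity itself in genus $1$; the lemma already covers $g=1$, $n=1$.)

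The gap is in your argument that $\Phi((c_S)^{\times n})$ is tautological. You assert $[(R,q_1,\ldots,q_n)]=\Phi((c_S)^{\times n})$ for an irreducible nodal rational curve $R$ in the linear system. But the identity $\Phi([(q_1,\ldots,q_n)])=[(R,q_1,\ldots,q_n)]$ is only established for \emph{nonsingular} members of $|L|$: the proof of Lemma \ref{Lem:clmorphk3} (via Lemma \ref{Lem:clmorph}) relies on $(c,\mathrm{ev}_1,\ldots,\mathrm{ev}_n)$ being a local isomorphism onto the incidence variety near the relevant point, and the footnote there makes clear this uses that all nearby curves are irreducible and nonsingular. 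Over the point $[R]\in|L|$, a $g$-nodal rational curve, the fibre of $c$ in $\oM_{g,n}(S,c_1(L))$ is not a single reduced point (there are many genus-$g$ stable maps with image $R$, obtained from partial normalizations), so the equality you want cannot be read off from part (2) of the corollary; as written, that step fails. The paper circumvents this as follows: on a general $(\widehat S,\widehat L_0)\in\mathcal{F}_d$ it chooses the points $q_i$ in $R\cap\widehat C$ for a general \emph{nonsingular} member $\widehat C$ of the linear system (this is where the count $\#(R\cap\widehat C)=2g-2\geq g\geq n$ is actually used --- not, as in your parenthetical, to find $n$ distinct points on $R$), so that $\Phi((c_{\widehat S})^{\times n})=[(\widehat C,q_1,\ldots,q_n)]$ by the nonsingular case; it then proves $[(\widehat C,q_1,\ldots,q_n)]$ is tautological by moving $\widehat C$ to $R$ inside the pencil they span, with the $q_i$ in its base locus, and using that $[(R,q_1,\ldots,q_n)]$ lies in the image of $\oM_{0,n+2g}\to\oM_{g,n}$. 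Your spreading-out step over $\mathcal{F}_d$ via Proposition \ref{Prop:spreadout} and the relative class $c_{\mathcal S}$ is the right final ingredient, but the pencil argument and the choice of the $q_i$ on a nonsingular member are the missing essential ideas.
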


\subsection{Quotients}

The symmetric group $S_n$ acts on $\oM_{g,n}$ by permuting the markings.
 For a partition $\mu=(n_1, \ldots, n_\ell)$ of $n$, 
let $$S_\mu = S_{n_1} \times \cdots \times S_{n_\ell} \subset S_n$$
be the subgroup
 permuting elements within the blocks defined by $\mu$. 
The stack quotient $$\oM_{g,\mu} = \oM_{g,n}/S_\mu$$ 
parametrizes curves 
\[\big(C,(\{p_{i,1}, \ldots, p_{i,n_i}\})_{i=1, \ldots, \ell}\big)\]
together with $\ell$ pairwise disjoint sets of marked points with sizes $n_i$ according to the partition $\mu$. 
The quotient map 
$$\pi : \oM_{g,n} \to \oM_{g,\mu}$$ 
allows us to define the tautological ring $R^*(\oM_{g,\mu})$ as the 
image of $R^*(\oM_{g,n})$ via push-forward by $\pi$.
The composition $$\pi_* \pi^* : A^*(\oM_{g,\mu}) \to A^*(\oM_{g,\mu})$$ is given by multiplication by $|S_\mu|$ . 
Therefore, to check if a cycle $\alpha$ on $\oM_{g,\mu}$ is tautological, 
it suffices to check that $\pi^*(\alpha)$ is tautological on $\oM_{g,n}$.

The following result for the quotient
moduli spaces $\oM_{g,\mu}$ is parallel to   Theorem \ref{curvek3} for
 $\oM_{g,n}$.
\begin{theorem} \label{Thm:K3markquot}
Let $C\subset S$ be an irreducible nonsingular curve of
genus $g$ on a $K3$ surface.
Let $0 \leq n \leq g$ and fix a partition $\mu=(n_1, \ldots, n_\ell)$ of $n$. 
Let 
$$(p_{i,j})_{\substack{i=1, \ldots, \ell \\ j=1, \ldots, n_i}}$$ 
be a collection
of distinct points $p_{i,j} \in C$ satisfying
 \begin{equation}
     [p_{i,1}] + [p_{i,2}] + \cdots + [p_{i,n_{i}}] \in \mathsf{BV}
 \end{equation}
for all $1\leq i \leq \ell$.
Then, the $0$-cycle
 \[[C,(\{p_{i,1}, \ldots, p_{i,n_i}\})_{i=1, \ldots, \ell}\, ] \in A_0(\oM_{g,\mu})\]
 is tautological.
\end{theorem}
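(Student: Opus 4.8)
The plan is to reduce the statement on the quotient $\oM_{g,\mu}$ back to the already-proven Theorem \ref{curvek3} on $\oM_{g,n}$, using the remark (made just before the theorem) that a cycle $\alpha$ on $\oM_{g,\mu}$ is tautological as soon as $\pi^*\alpha$ is tautological on $\oM_{g,n}$. So the real content is to identify $\pi^*$ of the given moduli point and show it lies in $R_0(\oM_{g,n})$. First I would observe that $\pi^*$ of the class $[C,(\{p_{i,1},\ldots,p_{i,n_i}\})_i]$ is the sum, over all ways of ordering the points within each block, of the corresponding honest marked moduli points of $\oM_{g,n}$; concretely it is $\sum_{\sigma\in S_\mu}[C,p_{\sigma(1)},\ldots,p_{\sigma(n)}]$ up to the automorphism bookkeeping, which is a finite $\mathbb{Q}$-linear combination of points of the form $[C,p_{\tau(1)},\ldots,p_{\tau(n)}]$ for various reorderings $\tau$. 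Since the tautological ring is $S_n$-invariant (the $S_n$-action is generated by tautological morphisms), it suffices to show that a \emph{single} such ordered point $[C,p_1,\ldots,p_n]\in A_0(\oM_{g,n})$ is tautological — but here the points $p_1,\ldots,p_n$ are no longer individually Beauville-Voisin, only the partial sums within blocks are, so Theorem \ref{curvek3} does not apply directly.

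The key step is therefore to invoke Corollary \ref{Cor:k3augmented}: for $L=\mathcal{O}_S(C)$ and $n\le g$ there is a $\mathbb{Q}$-linear map $\Phi:A_0(S^n)\to A_0(\oM_{g,n})$ with $[C,p_1,\ldots,p_n]=\Phi([(p_1,\ldots,p_n)])$, and $\Phi((c_S)^{\times n})$ is tautological. Now I would exploit linearity of $\Phi$ together with the block structure. Within block $i$ we are given $[p_{i,1}]+\cdots+[p_{i,n_i}]=m_i\,c_S$ in $A_0(S)$ for some $m_i\in\mathbb{Z}$; since each $p_{i,j}$ lies on $C$ and $C$ has genus $g\ge 2$ we in fact have degree $n_i$, so $m_i=n_i$, hence $[p_{i,1}]+\cdots+[p_{i,n_i}] = n_i\, c_S$. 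The class $[(p_1,\ldots,p_n)]\in A_0(S^n)$, where the $p$'s are the block points in some fixed order, can be compared to $(c_S)^{\times n}$ by a telescoping/hybrid argument: replace the points of block $1$, then block $2$, and so on, one block at a time. Replacing block $i$ changes $A_0(S^n)$ only in the $n_i$ tensor factors indexed by that block, and there we are comparing $[p_{i,1}]\times\cdots\times[p_{i,n_i}]$ with $(c_S)^{\times n_i}$ inside $A_0(S^{n_i})$; the hypothesis that the \emph{sum} of the $[p_{i,j}]$ equals $n_i c_S$ does not by itself give equality of the products, so instead I would push this comparison through $\Phi$ rather than trying to do it on $S^n$.

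The cleanest route, and the one I would write up, is to work block-by-block directly at the level of $\Phi$, using that $\Phi$ is symmetric under permuting markings within a block (because $c$ and the remaining structure are symmetric there) so that $\Phi$ descends to a map out of $\mathrm{Sym}^{n_1}A_0(S)\otimes\cdots\otimes\mathrm{Sym}^{n_\ell}A_0(S)$; there, the Beauville-Voisin relations $\sum_j[p_{i,j}]=n_i c_S$ together with the multiplicative structure of the Beauville-Voisin ring (specifically that all products of positive-dimensional cycles land in $\mathbb{Z}\cdot c_S$, from the defining properties of $c_S$) force the relevant symmetric class to equal the one built from $(c_S)^{\times n}$ up to classes that $\Phi$ kills or sends to tautological classes. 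Combined with $\Phi((c_S)^{\times n})$ tautological from Corollary \ref{Cor:k3augmented}, this gives that $[C,p_1,\ldots,p_n]$ is tautological on $\oM_{g,n}$, hence $\pi^*$ of the quotient moduli point is tautological, hence the quotient moduli point itself is tautological. I expect the main obstacle to be precisely this last algebraic point: showing that the symmetrized product class determined by the $p_{i,j}$ agrees, after applying $\Phi$, with the class determined by $(c_S)^{\times n}$ — i.e., checking that the ambiguity in passing from "sum of points in a block is $n_i c_S$" to "product of points in a block equals $(c_S)^{\times n_i}$" is absorbed by the Beauville-Voisin multiplicativity and by $\Phi$, rather than genuinely obstructing the conclusion.
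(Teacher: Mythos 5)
Your overall architecture matches the paper's: reduce to showing $\pi^*$ of the moduli point is tautological on $\oM_{g,n}$, recognize that this pullback is $\sum_{\sigma\in S_\mu}[\sigma(C,\ul p)]=\Phi\bigl(\sum_{\sigma\in S_\mu}[\sigma.\ul p]\bigr)$ via Corollary \ref{Cor:k3augmented}, and then try to compare the class $\Sigma(\ul p)=\sum_{\sigma\in S_\mu}[\sigma.\ul p]\in A_0(S^n)$ with $(c_S)^{\times n}$. But the step that actually makes the proof work is missing. The paper proves a concrete claim: after reducing to a single block $\mu=(n)$, the symmetrized cycle $\sum_{\sigma\in S_n}[(p_{\sigma(1)},\ldots,p_{\sigma(n)})]$ is a \emph{universal} polynomial expression in $\theta=[p_1]+\cdots+[p_n]$ and the pushforwards of its powers along the various diagonal maps $S^k\to S^n$ (an inclusion--exclusion identity; for $n=3$ it is $\theta^{\times 3}-\sum(\Delta_{ij,k})_*(\theta^{\times 2})+2(\Delta_{123})_*(\theta)$). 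Hence $\Sigma(\ul p)$ depends only on the blockwise sums, so the $p_{i,j}$ may be exchanged for Beauville--Voisin points, after which $\Sigma$ becomes a multiple of $(c_S)^{\times n}$ and the Corollary finishes. You correctly observe that the blockwise sum does not determine the product $[p_{i,1}]\times\cdots\times[p_{i,n_i}]$, but you never supply the mechanism that resolves this; you instead flag it as ``the main obstacle'' and express the hope that it is ``absorbed by the Beauville--Voisin multiplicativity and by $\Phi$.'' That hope is not justified: the property that intersection products of divisors on $S$ land in $\Z\cdot c_S$ concerns $A_1(S)\otimes A_1(S)\to A_0(S)$ and plays no role here, where the issue is external products of $0$-cycles in $A_0(S^n)$; nor does $\Phi$ kill or tautologize any error term. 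Symmetry alone also does not suffice (for an abstract vector space $V$, the image of $v_1\otimes\cdots\otimes v_n$ in $\mathrm{Sym}^nV$ is not determined by $\sum v_i$); the diagonal pushforwards are essential.

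A secondary slip: you assert early on that ``it suffices to show that a single such ordered point $[C,p_1,\ldots,p_n]$ is tautological.'' While formally a sufficient condition, it is not one you can establish (the individual ordered points are not expected to be tautological when the $p_{i,j}$ are not individually Beauville--Voisin --- this is exactly why the theorem is stated on the quotient $\oM_{g,\mu}$), and you rightly abandon it. The proof therefore stands or falls on the inclusion--exclusion identity above, which needs to be stated and proved rather than conjectured.
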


\proof
It suffices to show that
the pullback $\pi^*([C,({p_{i,j}})_i])$ is tautological. 
Fix  an ordering $\ul p = (p_{i,j})_{i}$ of all the
markings. The pullback is exactly given by
\[\pi^*([C,({p_{i,j}})_i\, ]) = \sum_{\sigma \in S_\mu} [\sigma(C,\ul p)]\, .\]
Using Corollary \ref{Cor:k3augmented}, we can write the result
 as $\Phi(\Sigma(\ul p))$ for the sum
\[\Sigma(\ul p) = \sum_{\sigma \in S_\mu} [\sigma.\ul p] \in A_0(S^n)\, ,\]
where we have used the natural permutation action of $S_n$ on $S^n$. 

We claim that the cycle $\Sigma(\ul p)$ only depends on the blockwise sums 
$$\Sigma_i(\ul p)=\sum_j [p_{i,j}] \in A_0(S)$$
 for $i=1, \ldots, \ell$. 
Blockwise dependence together
 with the hypothesis $$\Sigma_i(\ul p)\in \mathsf{BV}$$ immediately 
yields the result of Theorem \ref{Thm:K3markquot}
(since we can exchange all the $p_{i,j}$ for Beauville-Voisin points).

It remains only to prove the blockwise dependence.
We first observe that we can write 
$\Sigma(\ul p)$
 as a product
\[\Sigma(\ul p) = \sum_{\sigma \in S_\mu} [\sigma.\ul p] = \prod_{i=1}^\ell \sum_{\sigma_i \in S_{n_i}} [\sigma_i. (p_{i,j})_{j=1, \ldots, n_i}]\, ,\]
where we recall that $S_\mu$ is the product of the groups $S_{n_i}$. 
It suffices then to show that the $i$th factor in the above product only depends on the sum $\Sigma_i(\ul p)$. The latter claim amounts to
a reduction to the case of the partition $\mu=(n)$ 
where all the markings are permuted. 

Let $P=\{p_1, \ldots, p_n\}$. We will  write 
$$\Sigma(\ul p) = \sum_{\sigma \in S_n} [(p_{\sigma(1)}, \ldots, p_{\sigma(n)})]$$
 as a sum of terms depending only upon 
$$\theta = \Sigma_1(\ul p) = [p_1] + \cdots + [p_n]\, $$
using a simple inclusion-exclusion strategy.

We illustrate the strategy in the case of  $n=3$. 
We start with the formula
\[\theta^{\times 3} = \sum_{q_1, q_2, q_3 \in P} [(q_1,q_2,q_3)]\, .\]
To obtain $\Sigma(\ul p)$, we must 
substract all summands where there is a pair $i \neq j$ with $q_i = q_j$. 
Let $$\Delta_{12,3}\, ,\  \Delta_{13,2}\, ,\  \Delta_{23,1}\, : S^2 \to S^3$$ 
be the three diagonal maps. The cycle
\[\theta^{\times 3} - (\Delta_{12,3})_*(\theta^{\times 2}) - (\Delta_{13,2})_*(\theta^{\times 2})- (\Delta_{23,1})_*(\theta^{\times 2})\]
is equal to $\Sigma(\ul p)$ minus $2$ times the cycle 
$$[(p_1,p_1,p_1)]+[(p_2,p_2,p_2)]+[(p_3,p_3,p_3)]\, .$$ 
 We can cancel the error term by adding a correction
$2(\Delta_{123})_*(\theta)$ by the small diagonal:
\[\Sigma(\ul p) = \theta^{\times 3} - (\Delta_{12,3})_*(\theta^{\times 2}) - (\Delta_{13,2})_*(\theta^{\times 2})- (\Delta_{23,1})_*(\theta^{\times 2}) + 2(\Delta_{123})_*(\theta)\, .\]
Such an inclusion-exclusion strategy is valid for 
all $n\geq 1$.
\qed


\section{Other surface geometries} \label{others}
\subsection{Enriques surfaces}
An Enriques surface $E$ is a free $\mathbb{Z}_2$
quotient of a nonsingular projective $K3$ surface $S$:
$$E = S / \mathbb{Z}_2\, .$$

\begin{conjecture} \label{ggef} The moduli point of
an irreducible nonsingular curve $C\subset E$
of genus $g\geq 2$ determines a tautological $0$-cycle in
$\oM_g$.
\end{conjecture}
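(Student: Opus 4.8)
The plan is to mimic the structure of the proof of Theorem~\ref{curvek3}, replacing the moduli of quasi-polarized $K3$ surfaces by the moduli of polarized Enriques surfaces, and replacing the existence of nodal rational curves in a linear system on a general $K3$ by the existence of a suitable \emph{rational} (or, failing that, \emph{totally geodesic / low genus}) curve in a linear system on a general Enriques surface. First I would fix $C\subset E$ of genus $g\ge 2$, set $L=\mathcal{O}_E(C)$, and pass to the universal family $\pi\colon\mathcal{E}\to\mathcal{M}_E$ of polarized Enriques surfaces with the polarization class of $L$ (or of a primitive root of it). Exactly as in Lemma~\ref{Lem:clmorphk3} and Corollary~\ref{Cor:k3augmented} one writes $[C]\in A_0(\oM_g)$ (here $n=0$, so there are no marked points and no Beauville--Voisin hypothesis is needed) as
\[
\epsilon_*\!\left( c^*H^{\,g-1}\cap[\oM_g(E,c_1(L))]^{\mathrm{red}}\right),
\]
using the reduced virtual class for Enriques surfaces (which exists for the same reason as on $K3$: an Enriques surface carries a nowhere-vanishing holomorphic two-form on its $K3$ cover, giving an obstruction space with a trivial summand, hence the standard reduced-class construction; one should cite the relevant source). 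Here $c\colon\oM_g(E,c_1(L))\to|L|$ is the Hilbert--Chow map and $H$ is the hyperplane class, and the key point is that $\dim|L|=g-1$, which follows from Riemann--Roch on $E$ together with $\chi(\mathcal{O}_E)=1$ and the vanishing of $h^1(L)$ and $h^2(L)$ for $L$ sufficiently positive (this is Lemma~\ref{Lem:clmorph}-style bookkeeping, adapted to $K_E$ torsion rather than trivial).

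Next, by Proposition~\ref{Prop:spreadout} applied to the family of $0$-cycles
\[
\mathcal{Z}=(\epsilon,\pi)_*\!\left(c^*\mathcal{H}^{\,g-1}\cap[\oM_g(\pi,c_1(\mathcal{L}))]^{\mathrm{red}}\right)\in A^{3g-3}(\oM_g\times\mathcal{M}_E),
\]
it suffices to show the fibre of $\mathcal{Z}$ over a \emph{general} polarized Enriques surface is tautological. So let $(\widehat E,\widehat L)$ be general. The geometric input needed is: the linear system $|\widehat L|$ (or a system in the same family) contains an irreducible curve $R\subset\widehat E$ whose normalization has genus $0$, or more generally a reducible nodal curve each of whose components is rational, so that the pencil of curves connecting $R$ to the smooth member $\widehat C$ shows $[\widehat C]\in A_0(\oM_g)$ equals $[R]$, and the latter is tautological because it lies in the image of $\oM_{0,2g}\to\oM_g$. (If $R$ is reducible, one uses the "reducible nodal curve" variant of Theorem~\ref{curveratsur} discussed at the end of Section~\ref{vvvv}, distributing the $2g$ node-preimages among the rational components.) Then invoking Proposition~\ref{Lem:generic}, or directly the spreading-out above, one concludes $[C]\in R_0(\oM_g)$.

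The main obstacle, and the reason this is stated as a conjecture rather than a theorem, is precisely Step~3: establishing the existence on a \emph{general} polarized Enriques surface of an irreducible (or totally reducible into rational pieces) nodal curve in the given linear system. On $K3$ surfaces this is a deep theorem of Chen; the Enriques analogue is a question about nonemptiness of certain Severi varieties $V_{|L|,\delta}$ with $\delta$ equal to the arithmetic genus — exactly the open problem flagged in the paragraph preceding Conjecture~\ref{ggef}. A secondary technical point is checking that the reduced virtual class machinery and the dimension count $\dim|L|=\operatorname{vdim}^{\mathrm{red}}=g-1$ go through verbatim; this I expect to be routine, following Lemmas~\ref{Lem:clmorph} and~\ref{Lem:clmorphk3} with $\chi(\mathcal{O}_E)=1$ in place of $\chi(\mathcal{O}_S)=0$ or $2$. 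One should also note that the positivity issue that complicates the rational-surface case does not arise here, since we may always replace $L$ by a large multiple (or work in a sufficiently positive polarization class) without changing the conclusion, because a curve in $|mL|$ degenerating to a union of rational curves still produces a tautological class, and the original curve $C$ can be recovered by the spreading-out argument once we know the general fibre over the appropriate moduli component is tautological.
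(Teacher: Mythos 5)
Your overall architecture --- spread out over the moduli of polarized Enriques surfaces via Proposition \ref{Prop:spreadout}, reduce to a general member, degenerate inside the linear system, and isolate the nonemptiness of a Severi variety as the open obstruction --- matches the strategy the paper sketches before Conjecture \ref{ggef}, and you correctly identify why the statement remains a conjecture. But the degeneration you aim for is the wrong one, and this is not a cosmetic issue. On an Enriques surface $c_1(E)$ is torsion and $\chi(\mathcal{O}_E)=1$, so Riemann--Roch gives $\dim|L|=g-1$, not $g$ as on a $K3$. Imposing $\delta$ nodes cuts the expected dimension by $\delta$, so the most degenerate members one can hope for in $|L|$ are irreducible curves with $g-1$ nodes, whose normalizations have geometric genus $1$, not $0$; the genus-$0$ Severi variety has expected dimension $-1$ and should be empty for a general polarized Enriques surface. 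This is precisely where the Enriques case diverges from the $K3$ case and where your argument loses its punchline: a $(g-1)$-nodal irreducible curve lies in the image of $\oM_{1,2(g-1)}\to\oM_g$, not of $\oM_{0,2g}\to\oM_g$, and points of $\oM_{1,m}$ are \emph{not} automatically tautological (indeed $A_0(\oM_{1,m})$ has infinite rank for $m\geq 11$). The paper supplies the missing ingredient as Proposition \ref{eeee}: the locus of irreducible $(g-1)$-nodal curves in $\oM_{g,1}$ is rational, being birational to $\oM_{1,1+2(g-1)}/G$ with $G=(\mathbb{Z}/2\mathbb{Z})^{g-1}\rtimes S_{g-1}$, and rationality of this \emph{locus} (not of $\oM_{1,2(g-1)}$ itself) is what makes every $(g-1)$-nodal curve tautological. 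Without that step, connecting $\widehat C$ to the degenerate member by a pencil proves nothing.

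Two further points would fail as written. Your appeal to a reduced virtual class for Enriques surfaces, justified by the holomorphic symplectic form on the $K3$ cover, is incorrect: that form is anti-invariant under the Enriques involution and does not descend, $h^{2,0}(E)=0$, and no reduced class exists --- nor is one needed, since the ordinary virtual dimension $\int_{[C]}c_1(E)+g-1=g-1$ already equals $\dim|L|$, so a Lemma \ref{Lem:clmorphk3}-style identity would use the ordinary class (modulo verifying $h^1(L)=h^2(L)=0$, for which positivity of $L$ does enter). Finally, the closing suggestion to replace $L$ by a large multiple $mL$ is not coherent: members of $|mL|$ have a different genus and define classes in a different $\oM_{g'}$, so degenerations there give no information about $[C]\in A_0(\oM_g)$ for $C\in|L|$.
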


There is a clear strategy for the proof of Conjecture \ref{ggef}.
The curve $C$ is expected to move in a linear series $|L|$ on $E$ of
dimension $g-1$. We therefore expect to find irreducible curves
$\widehat{C}\in |L|$ with $g-1$ nodes. 
The issue can be formulated as the nonemptiness of 
 certain Severi varieties for linear
systems on Enriques surfaces which is
currently being studied, see \cite{severienriques}.
Once it is shown that the linear series $|L|$ contains an irreducible
$(g-1)$-nodal curve $\widehat{C} \subset E$, the final step is to prove that
the $0$-cycle 
$$[\widehat{C}]\in A_0(\oM_g)$$
is always tautological. In fact, the following stronger result
holds.

\begin{proposition}\label{eeee}
The locus of irreducible $(g-1)$-nodal curves in $\oM_{g,1}$ 
 is rational.
 In particular, every such curve defines a tautological cycle 
$$[\widehat{C},p] \in R_0(\oM_{g,1})\, .$$
\end{proposition}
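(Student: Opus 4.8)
The plan is to exhibit an explicit birational parametrization of the locus $\mathcal{N}_{g} \subset \oM_{g,1}$ of pointed irreducible $(g-1)$-nodal curves, showing this locus is rational, and then invoke Proposition~\ref{Prop:spreadout} (via Proposition~\ref{Lem:generic}) to conclude that every such pointed curve is tautological. The starting observation is that the normalization of an irreducible $(g-1)$-nodal curve of arithmetic genus $g$ is a \emph{rational} curve $\mathbb{P}^1$: the geometric genus drops by exactly $g-1$. So the data of such a curve is equivalent to the data of a rational curve together with $g-1$ pairs of points on $\mathbb{P}^1$ to be glued (plus the extra marking $p$, which pulls back to a point of $\mathbb{P}^1$). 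Concretely, I would consider the parameter space
$$
\left(\mathbb{P}^1\right)^{2(g-1)+1} \big/ \mathrm{PGL}_2(\mathbb{C}),
$$
where the $2(g-1)$ points are the preimages of the $g-1$ nodes, the last point is the preimage of $p$, and $\mathrm{PGL}_2$ acts diagonally by reparametrization of $\mathbb{P}^1$. Since $2(g-1)+1 \geq 3$ for $g \geq 2$, this is a quotient of a rational variety by a connected group acting generically freely, hence rational (one can rigidify by fixing three of the points at $0, 1, \infty$, leaving an open subset of affine space). There is a natural rational map from this space to $\oM_{g,1}$ sending such a configuration to the stable pointed curve obtained by gluing, and on a dense open set (where the chosen points are distinct and the glued curve is stable and has exactly $g-1$ nodes) this map is dominant onto $\mathcal{N}_g$ and generically injective, so $\mathcal{N}_g$ is rational.

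Next I would check that $\mathcal{N}_g$, or its closure $\overline{\mathcal{N}_g} \subset \oM_{g,1}$, is irreducible — which is immediate from the parametrization above, since it is dominated by an irreducible variety. The generic point of $\overline{\mathcal{N}_g}$ is tautological: in fact the generic point \emph{lies in a rational subvariety} of $\oM_{g,1}$, namely $\mathcal{N}_g$ itself, so its class in $A_0(\oM_{g,1})$ is the unique rational class, which coincides with the tautological class $R_0(\oM_{g,1}) \cong \mathbb{Q}$. Even more directly: the curve $\widehat C$ obtained by gluing points of $\mathbb{P}^1$ literally lies in the image of a gluing morphism from an $\oM_{0,m}$-type stratum (identifying $g-1$ pairs of markings on a genus $0$ curve), so its moduli point is tautological by exactly the argument used in the proof of Proposition~1.3 (the Graber–Vakil argument). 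Hence every point of $\mathcal{N}_g$ lying in the image of such a gluing construction is tautological, and by Proposition~\ref{Lem:generic} applied to the irreducible set $\overline{\mathcal{N}_g}$, \emph{every} point of $\overline{\mathcal{N}_g}$ — in particular every $[\widehat C, p]$ with $\widehat C$ irreducible $(g-1)$-nodal — is tautological.

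The main obstacle I anticipate is bookkeeping rather than conceptual: one must verify that a generic configuration of glued points on $\mathbb{P}^1$ actually produces a \emph{stable} pointed curve of arithmetic genus $g$ with \emph{exactly} $g-1$ nodes and no further degeneration (e.g. no three branches meeting, no node colliding with $p$), so that the parametrizing map lands in the locus we care about and is generically an isomorphism onto it. This requires $2g - 2 + 1 > 0$, which holds, and a short dimension count: $\dim \mathcal{N}_g = 2(g-1) + 1 - 3 = 2g - 4$, consistent with $\mathcal{N}_g$ being a $(g-1)$-nodal (hence codimension $g-1$) locus in $\oM_{g,1}$ of dimension $3g - 3 + 1 - (g-1) = 2g - 3$ — wait, this needs care, since pointed $(g-1)$-nodal curves with a genus-$0$ normalization form a locus of dimension $\dim \oM_{0, 2(g-1)+1} = 2(g-1)+1-3 = 2g-4$, which is indeed the dimension of the image; I would double-check the stratum dimension matches and that genericity of the configuration guarantees irreducibility and nodality of the image curve. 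Once these transversality-type checks are in place, the rationality and the tautological conclusion both follow cleanly.
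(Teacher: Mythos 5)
There is a genuine error at the very first step: the normalization of an irreducible curve of arithmetic genus $g$ with $g-1$ nodes has geometric genus $g-(g-1)=1$, so it is an \emph{elliptic} curve, not $\mathbb{P}^1$. Gluing $g-1$ pairs of points on a $\mathbb{P}^1$ produces a curve of arithmetic genus $g-1$, so your parametrization by $(\mathbb{P}^1)^{2(g-1)+1}/\mathrm{PGL}_2$ describes the locus of irreducible \emph{$g$-nodal} curves of genus $g$ (those with rational normalization), which is a proper closed subset of the closure of the locus in question, not the locus itself. The dimension discrepancy you flagged as ``needing care'' is in fact the symptom: the $(g-1)$-nodal locus in $\oM_{g,1}$ has dimension $3g-2-(g-1)=2g-1$, matching the correct parametrization by $\oM_{1,1+2(g-1)}$ (genus $1$ curves with one extra marking and $g-1$ pairs of points to be glued), whereas your space has dimension $2g-4$. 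Because the normalization is elliptic, your ``even more directly'' argument --- that $\widehat{C}$ lies in the image of a genus-$0$ gluing stratum --- also fails for the curves actually under consideration.

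The real content of the paper's proof, which your proposal bypasses entirely, is the rationality of the quotient $\oM_{1,1+2(g-1)}/G$ with $G=(\mathbb{Z}/2\mathbb{Z})^{g-1}\rtimes S_{g-1}$, the group exchanging the two branches at each node and permuting the nodes (this quotient, not the ordered-points space, maps birationally onto the nodal locus). The paper handles this by reinterpreting each unordered pair $\{q_j,q_j'\}$ as a degree-$2$ effective divisor $D_j$ on the elliptic curve $(X,p)$, encoding $D_j$ by the line bundle $\mathcal{O}(D_j)\cong\mathcal{O}(p+l_j)$ together with a section up to scale, and exhibiting the quotient as a $(\mathbb{P}^1)^{g-1}$-bundle over a rational base. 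None of this machinery is needed when the normalization is rational --- which is precisely why your version looks easy --- but that is the wrong locus. Your concluding mechanism (an irreducible rational locus whose points are all rationally equivalent, containing at least one tautological point, consists entirely of tautological points) is sound once the correct locus is shown to be rational.
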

\proof
The closure of the locus of $(g-1)$-nodal curves is parametrized by the gluing map
\[\xi : \oM_{1,1+2(g-1)} \to \oM_{g}\]
taking a curve $(X,p,q_1, q_1', \ldots, q_{g-1}, q_{g-1}')$ of genus $1$ with $1+2(g-1)$ markings and identifying the $g-1$ pairs $q_j, q_j'$ of points. 
The group 
$$G=(\mathbb{Z}/2\mathbb{Z})^{g-1} \rtimes S_{g-1}$$ 
acts on $\oM_{1,1+2(g-1)}$: 
the $j$th factor $\mathbb{Z}/2\mathbb{Z}$ switches the two points $q_j, q_j'$ and the group $S_{g-1}$ permutes the $n$ pairs of points among each other. 
Since the gluing map $\xi$ is invariant under this action, it factors through the map
\[\tilde \xi : \oM_{1,1+2(g-1)}/G \to \oM_{g,1},\]
which is birational onto its image.

To prove $\mathcal{M}=\cM_{1,1+2(g-1)}/G$ is rational, 
we take a  modular reinterpretation. 
Instead of remembering the $2(g-1)$ points $q_j, q_j'$ on $X$ individually, 
we only remember the set 
$$\{ D_j=q_j + q_j' : j=1, \ldots, g-1 \}$$
 of $g-1$ effective divisors of degree $2$ on the curve $X$. 
We therefore have a birational identification
\begin{equation*}
    \mathcal{M} \mathrel{\leftrightarrow} \left\{\left(X,p, (D_j)_{j=1}^{g-1}\right): \begin{array}{c} X \text{ nonsingular elliptic curve with origin $p$,} \\  
    D_j \subset X \text{ effective degree $2$ divisors}\end{array} \right\} / S_{g-1}\, ,
\end{equation*}
where $S_{g-1}$ acts by permuting the divisors $D_1, \ldots, D_{g-1}$. 

An effective divisor $D_j \subset X$ is equivalent to the data of the degree $2$ line bundle 
$$\mathcal{L}_j=\mathcal{O}(D_j)$$ 
together with an element 
$$s_j \in \mathbb{P}(H^0(X, \mathcal{L}_j)) \cong \mathbb{P}^1\, .$$
 Furthermore, the class of the line bundle $\mathcal{L}_j$ is equivalent to specifying a point $l_j \in X$, by the correspondence sending 
$l_j$ to $\mathcal{O}(p+l_j)$, where $p \in X$ is the origin.
We define
\begin{equation*}
     \mathcal{P}=\left\{\left(X,p, (l_j)_{j=1}^{g-1}, (s_j)_{j=1}^{g-1}\right): \begin{array}{c} X\text{ nonsingular elliptic curve with origin $p$},\\ 
    l_j \in E\\  
    s_j \in \mathbb{P}(H^0(X, \mathcal{O}(p+l_j))) \end{array} \right\}\ .
\end{equation*}
We have a birational identification 
$$\mathcal{M} \mathrel{\longleftrightarrow} \mathcal{P}/ S_{g-1}\, .$$

 By forgetting the projective sections $s_j$, we obtain a map 
$$\mathcal{P} \to \mathcal{S}$$ to the space $\mathcal{S}$ parametrizing tuples 
$(X,p, (l_j)_j)$ as above. 
The above forgetful map 
is a $(\mathbb{P}^1)^{g-1}$-bundle which descends (birationally)
to a  $(\mathbb{P}^1)^{g-1}$-bundle 
$$\mathcal{P}/S_{g-1} \to \mathcal{S}/S_{g-1}$$ on the quotient. The base, the moduli space
parameterizing the data
$$(X,p, (l_j)_j)$$ up to
permutations of the $l_j$ by $S_{g-1}$, 
is easily seen to be rational using, to start, the 
rationality of the universal family of $\text{Jac}_2$ over $\oM_{1,1}$.
\qed

\vspace{15pt}

Using the rationality of $\oM_{1,10}$, Proposition \ref{eeee}
can be easily strengthened to show that
the locus of irreducible $(g-1)$-nodal curves in $\oM_{g,9}$ 
 is rational.
 In particular, every such curve defines a tautological cycle 
$$[\widehat{C}, p_1,\ldots,p_9] \in R_0(\oM_{g,9})\, .$$

\subsection{Abelian surfaces}
Let $A$ be a nonsingular projective Abelian surface.
An irreducible nonsingular curve 
$$C\subset A$$
 is expected to move in a linear series $|L|$  of
dimension $g-2$. We therefore expect to find curves
$\widehat{C}\in |L|$ with $g-2$ nodes. 
Unfortunately the strategy that we have outlined in the case of
Enriques surfaces fails here! The locus of 
 irreducible $(g-2)$-nodal curves in $\oM_{g}$ 
 is {\em not} always rational. The irrationality
of the locus of $7$ nodal curves in $\oM_{9}$ 
was proven with Faber using the non-triviality (and
representation properties) of $H^{14,0}(\oM_{2,14})$.
A study of the Kodaira dimensions of 
the loci of curves with multiple nodes in many (other)
cases can be found in \cite{Sch}.

Nevertheless, an affirmative answer to the following
question appears likely.

\begin{question}\label{abb}
Does every irreducible nonsingular curve $C\subset A$ of genus $g$
determine a tautological $0$-cycle $[C]\in A_0(\oM_g)$?
\end{question}

Another approach to Question \ref{abb} is to use  
curves on $K3$ surfaces via the Kummer construction. 
Using the involution $$\iota: A \to A\, ,\ \ a \mapsto -a\, ,$$ we obtain a $K3$ surface $S$ by resolving the singular points of the quotient $A/\iota$. If $C$ does not meet any of these $16$ points (which are the fixed-points of $\iota$), the corresponding rational map
\[A \to A/\iota \dashrightarrow S\]
is defined around $C \subset A$ and sends $C$ to a curve $C' \subset S$. The map $C \to C'$ is either a double cover (in which case it must be \'etale with $C'$ smooth) or birational. In the first case, $[C']$ is tautological by Theorem \ref{curvek3} which may help in proving that $[C]$ is tautological. In the second case, the curve $C$ is the normalization of $C'$, and we would require a variant of Theorem \ref{curvek3}  to show that, under suitable conditions, the normalization of an irreducible, nodal curve in a $K3$ surface is tautological.


\subsection{Surfaces of general type}
Let $S$ bs a nonsingular projective surface of general type.
A curve $C\subset S$ is {\em canonical} if
$$\omega_S \stackrel{\sim}{=} \mathcal{O}_S(C)\, .$$
The most basic question which can be asked is the following.

\begin{question}\label{abb2}
Does every irreducible nonsingular {\em canonical} 
curve $C\subset S$ of genus $g$
determine a tautological $0$-cycle $[C]\in A_0(\oM_g)$?
\end{question}

\noindent For surfaces $S$ arising as complete intersections
in projective space, the answer to Question \ref{abb2} is yes (since
complete intersection curves are easily seen to determine
tautological $0$-cycles by degenerating their defining equations
to products of linear factors). 
However, even for surfaces of general type arising as
double covers of $\mathbb{P}^2$, the issue does not
appear trivial (even though the canonical curves there are
realized as concrete double covers of plane curves).
In fact, Question \ref{abb2} is completely open in almost all
cases.

\section{Cyclic covers} \label{Sect:cycliccovers}
If a nonsingular projective complex curve $C$ admits a
Hurwitz covering of $\mathbb{P}^1$ ramified over only
3 points of $\mathbb{P}^1$, then $C$ can be defined
over $\overline{\mathbb Q}$ by Belyi's Theorem. Speculation \ref{ddd3}, for
$n=0$, then suggests that the moduli point of $C$ is tautological.
The following result proves a special case for cyclic 
covers.{\footnote{Following the notation of \cite{admcycles}, Theorem \ref{Thm:cycliccovers}
shows that the $0$-cycle
\[[\overline{\mathcal{H}}_{g,\, \mathbb{Z}/k\mathbb{Z},\,  (a,b,c)}] \in A_0(\oM_{g,n}) \]
is tautological for $a,b,c \in \mathbb{Z}/k\mathbb{Z}$ where
at least one of $a,b,c$ is coprime to $k$.}}

\begin{theorem} \label{Thm:cycliccovers}
Let $C$ be a nonsingular projective curve of genus $g$ admitting a cyclic cover 
$$\varphi: C \to \mathbb{P}^1$$
ramified over exactly three points of $\mathbb{P}^1$ and
with total ramification over at least one of them. 
Let $p_1, \ldots, p_n \in C$ be the ramification points of $\varphi$ (in some order). Then, the $0$-cycle
$$[C,p_1, \ldots, p_n] \in A_0(\oM_{g,n})$$ is tautological.
\end{theorem}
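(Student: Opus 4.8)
The plan is to reduce to the classical picture of cyclic covers and then produce an explicit rational (or at least unirational) parameter space mapping to the relevant stratum of $\oM_{g,n}$, so that tautologicality follows from Proposition~\ref{Lem:generic}. Concretely, a cyclic cover $\varphi:C\to\mathbb{P}^1$ of degree $k$ branched over three points, say $0,1,\infty$, with local monodromies $a,b,c\in\mathbb{Z}/k\mathbb{Z}$ satisfying $a+b+c\equiv 0$, is described affinely by an equation of the form $y^k = x^{a}(x-1)^{b}$ (up to the usual normalization of exponents), and the curve together with its ramification points is entirely determined by the discrete data $(k,a,b,c)$ — there is \emph{no} continuous modulus once we fix $0,1,\infty$. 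First I would make this precise: fix the branch locus to be $\{0,1,\infty\}$, write the normalization $\widehat C$ of the plane curve $y^k=x^a(x-1)^b$, identify the points $p_1,\dots,p_n$ lying over $0,1,\infty$ (their number $n$ being computed from $\gcd(a,k),\gcd(b,k),\gcd(c,k)$), and observe that $[C,p_1,\dots,p_n]$ is a \emph{single well-defined point} of $\oM_{g,n}$, so it suffices to exhibit \emph{one} rational (or unirational) family through it whose generic member is tautological — in fact it suffices to realize this very point inside an irreducible rational variety mapping to $\oM_{g,n}$.

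The key step is the total-ramification hypothesis: we may assume $c$ is coprime to $k$, i.e.\ there is a single point $p$ of $C$ over $\infty$ with ramification index $k$. The plan is to use this to degenerate the base $\mathbb{P}^1$. Consider the family of degree-$k$ cyclic covers of $\mathbb{P}^1$ branched along a divisor $0,1,t_2,\dots,t_r,\infty$ (with $\infty$ always totally ramified, $r$ the number of extra moving branch points chosen so that the total branching is compatible with genus $g$ after smoothing), with fixed local monodromy data interpolating between the configuration "all extra points collide at $1$" (which recovers our $(C,p_1,\dots,p_n)$, possibly after a partial normalization/stable reduction of the resulting cover) and a configuration where the branch points are generic. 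The total space of such cyclic covers over the configuration space of the $t_i$ is an \'etale cover of an open subset of $\mathcal{M}_{0,r+3}\cong(\mathbb{P}^1)^{\text{rational}}$, hence rational; mapping it to $\oM_{g,n}$ via $\epsilon$ and applying Proposition~\ref{Lem:generic} (or directly Proposition~\ref{Prop:spreadout}) shows that if \emph{some} point in this family is tautological then our point $[C,p_1,\dots,p_n]$ is. One would then choose the generic branch configuration and argue tautologicality there — most cleanly by degenerating \emph{all} the branch points together into a single totally ramified point, at which the cover acquires a long chain of rational components (or becomes a cyclic cover of a chain of $\mathbb{P}^1$'s), so that the stable limit lies in the image of $\oM_{0,2g+n}\to\oM_{g,n}$ and is therefore tautological by the argument in the Introduction.

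There is an important technical point to handle, which I expect to be the main obstacle: as the branch points $t_i$ vary or collide, the cyclic cover need not remain stable, and its stable limit must be computed via stable reduction (admissible covers). So the honest statement is that there is a \emph{family of admissible cyclic covers} over a compactified, still rational, configuration space $\oM_{0,r+3}/(\text{automorphisms})$, with a map to $\oM_{g,n}$ whose source is rational; the content is to check that our point $[C,p_1,\dots,p_n]$ and at least one \emph{tautological} point both lie in the image of one irreducible rational family. Equivalently, using the Hurwitz/admissible-cover spaces of \cite{admcycles}: the space $\overline{\mathcal{H}}_{g,\,\mathbb{Z}/k\mathbb{Z},\,(a,b,\dots)}$ of twisted stable maps realizing these cyclic covers of genus-$0$ targets is proper, and its connected components (fixed by the discrete monodromy data) are \emph{rational} when the target has genus $0$ — essentially because such a component fibers over $\oM_{0,m}$ with finite, actually trivial-after-normalization, fibers. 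Granting this rationality, every point of $\overline{\mathcal{H}}_{g,\,\mathbb{Z}/k\mathbb{Z},\,\ldots}$ pushes forward to a tautological $0$-cycle in $\oM_{g,n}$, which is exactly the footnoted reformulation of the theorem. The delicate part is therefore entirely in (i) setting up the correct Hurwitz space with the total-ramification marking so that it is connected and rational, and (ii) verifying that the marked points $p_1,\dots,p_n$ of the theorem are precisely the ramification sections of this family, so that $\epsilon$ sends the relevant point to $[C,p_1,\dots,p_n]$; once those are in place the conclusion is immediate from Proposition~\ref{Lem:generic}.
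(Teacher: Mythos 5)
Your overall strategy --- placing $[C,p_1,\ldots,p_n]$ inside an irreducible rational family in $\oM_{g,n}$ and invoking Proposition \ref{Lem:generic} --- founders on a rigidity problem that you acknowledge but do not resolve. A cyclic cover of $\mathbb{P}^1$ branched over exactly three points has no moduli: once the branch points are normalized and the local monodromies fixed, the relevant Hurwitz space is a single point. So the only families you propose are obtained by splitting a branch point into $t_2,\ldots,t_r$, and in such a family the pointed curve $(C,p_1,\ldots,p_n)$ is \emph{not} a fibre. The generic fibre is a cover branched over $r+2$ points, hence of genus $g'>g$ and with extra ramification markings; when the $t_i$ collide, the admissible-cover limit has nodal source $C\cup D$, where $D$ is the cover of the bubble, and the induced point of $\oM_{g',n'}$ is a boundary point having $C$ as one component --- not the point $[C,p_1,\ldots,p_n]\in\oM_{g,n}$. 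The parenthetical ``after a partial normalization/stable reduction'' is exactly where the argument breaks: normalizing a single fibre is not compatible with the family, so Proposition \ref{Prop:spreadout} has nothing to act on. The approach is moreover circular: maximally degenerating any admissible-cover family yields target components with three special points, whose covers are precisely the rigid three-branch-point covers the theorem is about, so the degenerate source need not lie in the image of $\oM_{0,2g+n}\to\oM_{g,n}$. A further warning sign is that your argument never genuinely uses the total-ramification hypothesis.

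The paper's proof is entirely different and avoids Hurwitz spaces. It realizes $C$ birationally as the curve $y^k=x(x-1)^b(x-2)^c$ inside $\mathbb{P}(\mathcal{O}_{\mathbb{P}^1}(1)\oplus\mathcal{O}_{\mathbb{P}^1})$ --- total ramification over $0$ is used to normalize $a=1$, so the curve is already nonsingular there --- then resolves the singularities over $x=1,2$ by iterated blowups whose effect on $\int_{[C]}c_1(S)$ is tracked by a Euclidean-algorithm bookkeeping, and finally verifies $\int_{[C]}c_1(S)\geq 1$ and $n\leq \mathrm{vdim}\,\oM_{g}(S,[C])$ so that Theorem \ref{curveratsur} applies. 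These inequalities genuinely require the total-ramification hypothesis: the paper notes that for $k=30$ and $(a,b,c)=(2,3,25)$ the same construction gives $\int_{[C]}c_1(S)=-20$. If you want to salvage your line of attack, you must produce a positive-dimensional rational family through $(C,p_1,\ldots,p_n)$ itself; the linear system $|C|$ on the resolved rational surface is exactly such a family, and that is the route the paper takes.
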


\proof
The basic idea is that a cyclic cover of $\mathbb{P}^1$ can (essentially) be cut out by a single equation in a projective bundle over $\mathbb{P}^1$. Indeed, after a
change of coordinates, we can assume that the branch points of $\varphi$ are given by $$0,1,2 \in \mathbb{P}^1\, . $$
Let $k$ be the degree of $\varphi$, and let $a,b,c \in \mathbb{Z}/k\mathbb{Z}$ be the monodromies of $\varphi$ at the branch points $0,1,2$ satisfying
$$a+b+c=0 \in \mathbb{Z}/k\mathbb{Z}\, . $$ Assume that the total ramification occurs over $0$. Then $a$ is coprime to $k$, and, by applying an automorphism of $\mathbb{Z}/k\mathbb{Z}$, we may assume $a=1$. We can then choose representatives $$b,c \in \{1, \ldots, k-1\}$$ such that $a+b+c=k$. 

With these choices in place, we see that (birationally) the curve $C$ is cut out in the projectivization of the line bundle $\mathcal{O}_{\mathbb{P}^1}(1)$
over ${\mathbb{P}^1}$ by the equation
\begin{equation} \label{eqn:cycliccover}
    y^k = x \cdot (x-1)^b \cdot (x-2)^c,
\end{equation}
where $x$ is a coordinate on the base $\mathbb{P}^1$.
We view 
the right hand side of (\ref{eqn:cycliccover}) as a section of $$\mathcal{O}_{\mathbb{P}^1}(1+b+c)=\mathcal{O}_{\mathbb{P}^1}(k)$$ 
where $y$ is the coordinate on (the total space of) the line bundle $\mathcal{O}_{\mathbb{P}^1}(1)$ over $\mathbb{P}^1$.

We say that $C$ is cut out birationally since, for $b,c \neq 1$, the above curve will have singularities at $$(x,y)=(1,0),(2,0)\, .$$
The
singularities
can be resolved by performing a
specific sequence of iterated blowups (as will be explained
in the next paragraph).
After finitely many steps, we will 
obtain $C$ sitting inside a blowup $S$ of $$\mathbb{P}=\mathbb{P}(\mathcal{O}_{\mathbb{P}^1}(1) \oplus \mathcal{O}_{\mathbb{P}^1})\,,$$
which is a nonsingular rational surface. 
In order to conclude by applying Theorem \ref{curveratsur},
we will have to check that
$$\int_{[C]} c_1(S)>0$$ 
holds and that the number $n$ of ramification points of $\varphi$ is at most equal to $\mathrm{vdim}\,\oM_{g}(S,[C])$.

The original curve $C_0$ in $\mathbb{P}$ is easily seen to be of class $\beta=k c_1(\mathcal{O}_{\mathbb{P}}(1))$, and 
we have $$\int_{\beta} c_1(\mathbb{P})=3k\, .$$ 
If $b>1$, then $C_0$ has a singularity of
multiplicity $b$ at $(x,y)=(1,0)$. 
For the coordinate $z=y/(x-1)$ on the blowup of $\mathbb{P}$ at $(1,0)$, the strict transform of $C_0$ is locally cut out by $z^{k-b}=(x-1)^b$. The relevant intersection number 
$$\int_{\beta-b E_1} c_1(\mathrm{Bl}_{(1,0)} \mathbb{P})=3k-b$$
has exactly decreased by the multiplicity $b$ of $C_0$ at $(1,0)$.

We can continue the process of blowing-up the singular point and taking the strict transform.
After $j$ steps, the curve still has a local equation of the form $z_1^{e_j} = z_2^{f_j}$. We started with $(e_0,f_0)=(k,b)$ and obtained $$(e_1,f_1)=(k-b,b)$$ in the first step. In general, the pairs $(e_j,f_j)$ are then obtained by performing a Euclidean algorithm starting from $(k,b)$. The multiplicity of the singular point after the $j$th step is exactly $\min(e_j,f_j)$. 
The process terminates after finitely many steps (when the minimum of $e_j,f_j$ is either $0$ or $1$). Then, the local equation is $z^g=1$ or $z^g=z'$, which is nonsingular.

Denote by $\mathrm{ms}(e,f)$ the sum of the multiplicities of the singular points that occur in the desingularization of $z_1^e=z_2^f$ in the above manner. The function is uniquely determined by the axioms
\begin{itemize}
    \item $\mathrm{ms}(e,f)=\mathrm{ms}(f,e)$,
    \item $\mathrm{ms}(e,0)=\mathrm{ms}(e,1)=0$,
    \item $\mathrm{ms}(e,f)=f + \mathrm{ms}(e-f,f)$, for $e\geq f$.
\end{itemize}
By the above analysis, the curve $C\subset S$ obtained by desingularizing $C_0\subset \mathbb{P}$ satisfies
\[\int_{[C]} c_1(S) = \int_{\beta} c_1(\mathbb{P}) - \mathrm{ms}(k,b) - \mathrm{ms}(k,c) = 3k - \mathrm{ms}(k,b) - \mathrm{ms}(k,c).\]
In order to show positivity, 
we must bound $\mathrm{ms}(e,f)$ from above. By induction, 
for $(e,f) \neq (1,1)$, we obtain:
\begin{equation} \label{eqn:boundms}
    \mathrm{ms}(e,f) \leq e + f - R(e,f)\text{, with }R(e,f)=
    \begin{cases}
    \gcd(e,f) &\text{ if }\gcd(e,f)>1\, ,\\
    3 &\text{ otherwise}\, .
    \end{cases}
\end{equation}
Then, we have
\begin{equation}
    \int_{[C]} c_1(S) \geq 3k - k-b-k-c + R(k,b)+R(k,c)= 1 + R(k,b)+R(k,c) \geq 1\, .
\end{equation}
For the virtual dimension we obtain 
\[\mathrm{vdim}\,\oM_{g}(S,[C])=\int_{[C]} c_1(S)+g-1 \geq g + R(k,b)+R(k,c)\, . \]
On the other hand, the number of ramification points equals $$n=1+\gcd(k,b)+\gcd(k,c)\,$$ so we have
\[\mathrm{vdim} - n \geq g-1 + \underbrace{(R(k,b)-\gcd(k,b))}_{\geq 0} + \underbrace{(R(k,c)-\gcd(k,c))}_{\geq 0} \geq g-1\, ,\]
which we can assume to be nonnegative. We have thus verified the assumptions of Theorem \ref{curveratsur}.
\qed
\vspace{10pt}

Without the assumption of total ramification over one of the three points, the proof technique above no longer works. Indeed, for $k=30$ and $$(a,b,c)=(2,3,25)\,,$$ a desingularization procedure over $x=0,1,2$ as in the above proof would result in a curve $C$ in $S$ satisfying $$\int_{[C]} c_1(S)=-20\, ,$$ which cannot be remedied by applying an automorphism of $\mathbb{Z}/30\mathbb{Z}$. 
Nevertheless, we expect Theorem \ref{Thm:cycliccovers} to hold without the assumption of total ramification and even without the assumption of the cover being cyclic.

\section{Summing to tautological cycles} \label{sT}
\subsection{Existence}
As the examples $\oM_{1,n\geq 11}$  show, 
the Chow group of $0$-cycles on $\oM_{g,n}$ can be infinite dimensional
over $\mathbb{Q}$.  
The general point of $\oM_{g,n}$ may not determine
 a tautological $0$-cycle. However, by adding points 
(with the number of points uniformly bounded in terms of $g,n$),
we can arrive at a tautological $0$-cycle. 
For technical reasons, we formulate the  result
 for the coarse moduli space $\overline{M}_{g,n}$.

\begin{proposition} \label{siblings}
 Given $g,n$ with $2g-2+n>0$, there exists an integer $T=T(g,n) \geq 1$
satisfying the following property: 
for any point 
$$Q_1=(C,p_1, \ldots, p_n) \in \overline{M}_{g,n}\, ,$$ 
we can find $Q_2, \ldots, Q_{T} \in \overline{M}_{g,n}$ such that
\[[Q_1] + \ldots + [Q_T] \in A_0(\overline{M}_{g,n})\]
is tautological. 
\end{proposition}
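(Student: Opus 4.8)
The plan is to produce, for a \emph{single} auxiliary curve construction, a uniformly bounded supply of ``siblings'' of any point of $\overline{M}_{g,n}$ whose sum is tautological. Concretely, first I would embed the given curve $C$ into a fixed ambient projective variety in such a way that $C$ moves in an algebraic family over a rationally connected base together with its $n$ marked points, and then exploit Proposition \ref{Prop:spreadout} in reverse: if one member of such a family has tautological class, then so does a general member, and hence (by spreading out and the countability statement) one can locate a tautological member passing through prescribed data. The cleanest route: realize $(C,p_1,\dots,p_n)$ as a hyperplane section — or more precisely as an intersection with a generic linear subspace of complementary dimension — of a fixed nonsingular projective variety $Y$ into which every genus-$g$ curve with $n$ markings can be embedded with bounded degree (e.g. a tricanonical or higher pluricanonical embedding gives a uniform bound on the Hilbert polynomial, hence on the relevant Chow/Hilbert scheme, independently of the individual curve). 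Over the (quasi-projective, possibly reducible) parameter space $H$ of such embedded curves with markings there is a universal family and an induced morphism $\epsilon: H \to \overline{M}_{g,n}$, and $\epsilon$ is dominant.

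The key point is then the following. Choose inside $Y$ a fixed ``obviously tautological'' configuration: for instance, a fixed nonsingular rational curve $R_0 \subset Y$ (possible since $Y$ can be taken to contain lines or conics) with $n$ marked points on it, whose moduli point lies in the image of $\overline{M}_{0,2g+n} \to \overline{M}_{g,n}$ after attaching $g$ trivial elliptic tails — wait, more cleanly: take a fixed stable curve $(C_0,p_1^0,\dots,p_n^0)$ of compact type built out of a rational spine, so that $[C_0,\ul p^0] \in R_0(\overline{M}_{g,n})$ by the Proposition of Section 1.2. Now I would consider a linear pencil (or a low-dimensional linear system) in the relevant complete linear system on $Y$ connecting an embedded copy of $(C,\ul p)$ to an embedded degenerate curve whose normalization/components reproduce $(C_0,\ul p^0)$. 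The fibers of this pencil sweep out a family of zero-cycles on $\overline{M}_{g,n} \times (\text{base})$ whose total class, when the base is $\mathbb{P}^1$ and we take the full family, is rationally equivalent on the total space to a multiple of the class of a generic fiber. Summing the members over the finitely many points where the pencil meets a fixed hyperplane (a number bounded uniformly by the degree data) expresses $[C,\ul p]$ plus a bounded number of other moduli points as something rationally equivalent to $T$ copies of a tautological class. The uniform bound $T(g,n)$ is then simply the (universal, $C$-independent) intersection number governing how many fibers of the pencil one must add — this is where boundedness of the Hilbert polynomial of a fixed pluricanonical model is essential.

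The step I expect to be the main obstacle is arranging the degeneration so that the ``error terms'' are themselves honest moduli points in $\overline{M}_{g,n}$ (not virtual classes, not cycles supported in higher codimension of the boundary) \emph{and} so that the total count is bounded independently of $C$. A slicker alternative that sidesteps the pencil bookkeeping is to invoke Proposition \ref{Lem:generic}: the locus of tautological points in $\overline{M}_{g,n}$ contains the image of $\overline{M}_{0,2g+n}$, which is an irreducible closed subvariety; one shows the $T$-fold sum map $\mathrm{Sym}^T \overline{M}_{g,n} \dashrightarrow A_0$ hits this locus for $T$ large by a dimension count against the countable union of ``tautological'' closed sets, using that a countable union of proper closed subsets cannot contain an uncountable algebraic family — but turning ``hits the tautological locus'' into ``sum \emph{equals} a tautological cycle in $A_0$'' still requires a geometric rational equivalence, so some version of the pencil/spreading-out argument seems unavoidable. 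I would therefore structure the proof as: (1) fix a uniform projective model of all $(C,\ul p)$; (2) build a one-parameter family in a fixed linear system degenerating to an explicitly tautological stable curve; (3) apply conservation of number / the projection formula along $c$ and $\epsilon$ to write $[C,\ul p]$ plus boundedly many other points as a cycle rationally equivalent to a tautological one, with $T(g,n)$ the relevant fixed intersection number. The delicate part, to be handled with care but not here in full, is verifying that the limit of the family is a \emph{stable} pointed curve whose class is manifestly in $R_0(\overline{M}_{g,n})$, for which one pre-arranges the degeneration to break $C$ into rational components carrying all the markings.
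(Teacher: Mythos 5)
There is a genuine gap, and in fact the central mechanism you propose cannot work. Your step (2) asks for a one-parameter family over a rational base (a pencil) connecting an embedded copy of $(C,p_1,\ldots,p_n)$ to an explicitly tautological stable curve. But any two points of $\mathbb{P}^1$ are rationally equivalent, so under the induced map $\mathbb{P}^1\to\overline{M}_{g,n}$ (after whatever stable reduction is needed, the base stays rational) their images have equal classes in $A_0(\overline{M}_{g,n})$; such a family would therefore prove that $[C,p_1,\ldots,p_n]$ is itself tautological, i.e.\ $T=1$. Since, for example, $A_0(\overline{M}_{1,11})$ has infinite rank while $R_0$ has rank $1$, no such family can exist for a general moduli point. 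So the degeneration step is not merely ``delicate'' --- it is impossible in general, and with it goes your only mechanism for producing the sibling points $Q_2,\ldots,Q_T$. A second, more technical problem is that the ``complete linear system on $Y$'' whose members are the pluricanonically embedded genus-$g$ curves does not exist: those curves form a Hilbert scheme, not a linear system, and there is no fixed surface containing them all, so the pencil you want to draw has no ambient linear system to live in.

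The paper's proof is far simpler and avoids degeneration entirely. Using Propositions \ref{Prop:spreadout} and \ref{Lem:generic} one first reduces to the case where $Q=Q_1$ is a general point of $\overline{M}_{g,n}$. One then chooses a very ample divisor $H$ on the projective coarse space and intersects $3g-3+n$ general members of $|H|$ passing through $Q$: since $Q$ is a nonsingular point, the intersection is transverse and reduced, producing a $0$-cycle $[Q_1]+\cdots+[Q_T]$ with $T=\deg(\overline{M}_{g,n},H)$ whose class is $H^{3g-3+n}$. Because every divisor class on $\overline{M}_{g,n}$ is tautological, this class lies in $R_0(\overline{M}_{g,n})$, and the uniform bound $T(g,n)$ is simply the degree of the moduli space. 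Your closing remark about ``$T(g,n)$ the relevant fixed intersection number'' is in the right spirit, but the intersection has to be taken directly in $\overline{M}_{g,n}$, where the tautologicality of divisor classes is available --- not in an auxiliary Hilbert scheme, where the pushed-forward hyperplane classes have no reason to be tautological.
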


\begin{proof}[Proof \emph{(suggested by A. Kresch)}]
By standard arguments using the results of Section \ref{bracc},
we may take $Q=Q_1$ to be a general point of
$\overline{M}_{g,n}$.
We then choose a very ample divisor class 
$$H \in A^1(\overline{M}_{g,n})\, .$$ Since $Q$ is a
nonsingular point of $\overline{M}_{g,n}$, general hyperplane sections 
$$H_1, \ldots, H_{3g-3+n} \in |H|$$ 
through $Q$ will
intersect transversely
in a union of reduced points 
$$\alpha=[Q_1]+ \cdots +[Q_T]\, ,$$
with $T=\mathrm{deg}(\overline{M}_{g,n},H)$. 
On the other hand, since all divisor classes on $\overline{M}_{g,n}$ are tautological, the class $\alpha$ is also tautological.
\end{proof}

\vspace{10pt}

\begin{remark} \label{Rmk:coarsemod}
{\em  Since the push-forward along the basic map
 $$\oM_{g,n} \to \overline{M}_{g,n}$$ 
is an isomorphism of $\mathbb{Q}$-Chow groups, we
 can derive a version of Proposition \ref{siblings}
 with $\overline{M}_{g,n}$ replaced by $\oM_{g,n}$.  
However,  $T(g,n)$ for $\overline{M}_{g,n}$,  
may differ from the corresponding number for $\oM_{g,n}$: 
if $Q_i \in \oM_{g,n}$ has nontrivial automorphisms, then the cycle $[Q_i] \in A_0(\overline{M}_{g,n})$ corresponds to the cycle 
$$|\mathrm{Aut}(Q_i)| \cdot [Q_i] \in A_0(\oM_{g,n})\, .$$ }
\end{remark}

\subsection{Minimality}
We denote by $T(g,n)$ the minimal integer having the property described 
in Proposition \ref{siblings}. 
The proof of Proposition \ref{siblings}
used the degree of $\oM_{g,n}$, but there are
several other geometric approaches to bounding $T(g,n)$. 
For example, we could use instead
the Hurwitz cycle results of \cite{faberpandharipande}. 
After fixing a degree $d \geq 1$, points $q_1, \ldots, q_b \in \mathbb{P}^1$, and partitions $\lambda_1, \ldots, \lambda_b$ of $d$, the sum of all points $[(C,(p_i)_i)]$ satisfying
\begin{itemize}
    \item there exists a degree $d$ map $C \to \mathbb{P}^1$ 
    with ramification profile $\lambda_j$ over $q_j \in \mathbb{P}^1$,
    \item with $(p_i)_i$ the set of preimages of the points $q_1, \ldots, q_b$
\end{itemize}
is tautological by \cite{faberpandharipande}. 
Since every genus $g$ curve $C$ admits \emph{some} map $C \to \mathbb{P}^1$, the result above implies that adding to $[C] \in A_0(\oM_{g})$ all cycles $[C']$ for curves $C' \to \mathbb{P}^1$ with the same branch points and ramification profiles as $C \to \mathbb{P}^1$ gives a tautological class. Hence, we  bound $T(g,0)$ in terms of a suitable Hurwitz number. A similar strategy works for any $n$ by including the markings $p_1, \ldots, p_n \in C$ among the ramification data of $C \to \mathbb{P}^1$.

However, these
approaches will likely not yield optimal bounds.
In all the cases listed in Figure \ref{Fig:ratconnMgn}, the space $\oM_{g,n}$ is rationally connected, so 
$$T(g,n)=1\, ,$$
 which is far below the bounds.

A different perspective on the question is to study 
 the behavior of $T(g,n)$ for fixed $g$ as $n\rightarrow 
\infty$.
The following result shows
 that the asymptotic growth in $n$ is {\em at most} linear.

\begin{proposition}\label{lineargrowth}
 Let $(g,n)$ satisfy $2g-2+n >0$. Then,
 \begin{equation} \label{eqn:Srecursion}
  T(g,n+m) \leq (gm+1)\cdot T(g,n)\, 
 \end{equation}
for all $m\geq 0$.
\end{proposition}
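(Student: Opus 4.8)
The plan is to prove the recursion $T(g,n+m) \leq (gm+1)\cdot T(g,n)$ by an inductive argument on $m$, reducing to the single-step estimate $T(g,n+1) \leq (g+1)\cdot T(g,n)$ and then iterating. The key geometric input is the forgetful morphism $p : \oM_{g,n+1} \to \oM_{g,n}$, whose fibres are curves, together with the fact (used already in the proof of the proposition that $R_0(\oM_{g,n}) \cong \mathbb{Q}$) that push-forward along $p$ preserves tautological classes. Given a general point $Q = (C,p_1,\ldots,p_n,p_{n+1}) \in \overline{M}_{g,n+1}$, I would first forget the last marking to obtain $Q' = (C,p_1,\ldots,p_n) \in \overline{M}_{g,n}$. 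By definition of $T(g,n)$, there are points $Q'_2,\ldots,Q'_{T(g,n)} \in \overline{M}_{g,n}$ so that $\sum_{j} [Q'_j]$ is tautological in $A_0(\overline{M}_{g,n})$.

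The second step is to lift this tautological relation back up along $p$. Since each $Q'_j$ corresponds to a stable curve $(C_j,\ldots)$, the fibre $p^{-1}(Q'_j)$ is (a quotient of) the curve $C_j$ itself, a curve of arithmetic genus $g$. One then observes that for a curve $C_j$ of genus $g$, any point of $C_j$ becomes, after adding at most $g$ suitably chosen further points, a divisor equivalent to a fixed rational-type configuration — concretely, any point $x \in C_j$ together with $g$ extra points on $C_j$ can be arranged to lie in the image of $\overline{M}_{0,2g+(n+1)} \to \overline{M}_{g,n+1}$, because on a genus $g$ curve a general effective divisor of degree $g{+}1$ moves in a pencil hitting a totally degenerate configuration; thus on each fibre we need at most $g+1$ points of $\overline{M}_{g,n+1}$ to sum to something tautological over that fibre. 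Combining: starting from $[Q] + \text{(extra points over the same fibre)}$, which is tautological when pushed to $\overline{M}_{g,n}$ up to the other fibres, and then handling each of the $T(g,n)$ base fibres by the same fibrewise trick, we land on a tautological $0$-cycle on $\overline{M}_{g,n+1}$ involving at most $(g+1)T(g,n)$ points. This gives $T(g,n+1) \leq (g+1)T(g,n)$; iterating $m$ times yields $T(g,n+m) \leq (g+1)^m T(g,n)$, which is worse than claimed, so the actual argument must be more careful — rather than iterating naively, I would add all $m$ markings at once and use that a general effective divisor of degree roughly $g m$ on $C_j$ relative to the base configuration already trivializes, giving the sharper factor $gm+1$ directly.

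The main obstacle is making precise the fibrewise statement that controls the multiplicative constant: one must show that for a (possibly nodal, possibly reducible) stable curve $C$ of arithmetic genus $g$, and a choice of $m$ new marked points, the moduli point $[C,p_1,\ldots,p_n,p_{n+1},\ldots,p_{n+m}]$ lies in a family of $0$-cycles on $\overline{M}_{g,n+m}$ of the form "tautological minus at most $gm$ other points", uniformly over the relevant parameter space, with the $+1$ accounting for the base point. This requires a linear-series argument on $C$ — realizing the configuration inside a pencil whose special fibre degenerates $C$ to a rational nodal curve with the markings distributed among the components of a $\overline{M}_{0,\bullet}$-stratum — together with an application of Proposition \ref{Prop:spreadout} to pass from the general such configuration to an arbitrary one, exactly as in Lemma \ref{Lem:clmorph}. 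The bookkeeping of how many points are spent per fibre, and checking that the stability condition $2g-2+(n+m)>0$ is respected throughout, is where the constant $gm+1$ (rather than something weaker) has to be extracted.
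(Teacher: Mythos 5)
Your skeleton is right in outline: work fibrewise over the forgetful map $\nu:\oM_{g,n+m}\to\oM_{g,n}$, use Riemann--Roch on the genus $g$ fibre curve to trade each added marking for $g$ auxiliary points, and treat all $m$ markings inside a single fibre (which is a blow-up of $C^m$) so that the cost is additive, $1+gm$, rather than the multiplicative $(g+1)^m$ you correctly reject. This matches the paper's strategy. But there is a genuine gap at the decisive step: you never explain correctly why the assembled $0$-cycle is tautological. Your proposed mechanism --- that the $g+1$ points chosen in one fibre can be arranged to sum to a configuration ``in the image of $\oM_{0,2g+(n+1)}\to\oM_{g,n+1}$'' because a degree $g+1$ pencil on $C$ ``hits a totally degenerate configuration'' --- is false. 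Every point of $\nu^{-1}([C,p_1,\ldots,p_n])$ is a stable curve containing the fixed smooth genus $g$ curve $C$ as a component; a pencil of divisors on $C$ moves points of $C$, it does not degenerate $C$ itself, so no point of the fibre is totally degenerate when $g\geq 1$. Moreover, if each single fibre's sum were individually tautological, you would obtain $T(g,n+m)\leq gm+1$ with no reference to $T(g,n)$ at all, and the $T(g,n)$ auxiliary base points in your own count would be doing no work.

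The missing ingredient is a section of $\nu$. The paper defines $\sigma:\oM_{g,n}\to\oM_{g,n+m}$ by attaching at $p_n$ a chain of rational curves carrying $p_n,\ldots,p_{n+m}$; since $\sigma$ is a composition of boundary gluing maps, $\sigma_*$ preserves tautological classes. The fibrewise Riemann--Roch argument, carried out in $A_0(\nu^{-1}(Q))\cong A_0(C^m)$ one coordinate at a time (each coordinate moved to $p_n$ at a cost of $g$ points, which is where $gm+1$ comes from --- not a single divisor of degree roughly $gm$), yields the rational equivalence $[Q_1]+\cdots+[Q_{gm+1}]=(gm+1)[\sigma(Q)]$ \emph{in the fibre}, hence in $A_0(\oM_{g,n+m})$. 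The class $[\sigma(Q)]$ is still not tautological by itself; one must add $(gm+1)[\sigma(P_i)]$ for the companions $P_2,\ldots,P_{T(g,n)}$ of $Q$ on the base, so that the total equals $(gm+1)\,\sigma_*\left([P_1]+\cdots+[P_{T(g,n)}]\right)$, which is tautological. Note also that pushing your cycle forward along $\nu$ and observing that its image is tautological on $\oM_{g,n}$ proves nothing upstairs; the push-forward that does the work is along $\sigma$, in the opposite direction. With the section in place (and the standard reduction to $Q$ general, hence with nonsingular domain, via Proposition \ref{Prop:spreadout}), the rest of your outline goes through.
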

\proof
 The natural forgetful map 
 \[\nu: \oM_{g,n+m} \to \oM_{g,n}\]
 has a section $\sigma$ defined by the
following construction: 
 $\sigma((C,p_1, \ldots, p_n))$ is the curve obtained by gluing a chain of rational curves containing the markings $p_{n}, \ldots, p_{n+m}$ at the previous position of $p_n \in C$. 
 \begin{equation}
    \vcenter{\hbox{
    \begin{tikzpicture}[scale=0.7]

\draw[domain=-3:3,smooth,variable=\y,thick]  plot ({0.7*sin(50*\y+25)-0.3*\y},{\y});
\filldraw ({0.7*sin(50*(1)+25)-0.3*(1)},{1}) circle (2pt) node[right]{$p_{n-1}$};
\filldraw ({0.7*sin(50*(-1)+25)-0.3*(-1)},{-1}) circle (2pt);
\filldraw ({0.7*sin(50*(-0.3)+25)-0.3*(-0.3)},{-0.3}) circle (2pt);
\filldraw ({0.7*sin(50*(-2)+25)-0.3*(-2)},{-2}) circle (2pt) node[right]{$p_{1}$};
\filldraw ({0.7*sin(50*(2)+25)-0.3*(2)},{2}) circle (2pt) node[right]{$p_{n}$};
\end{tikzpicture} }} \xmapsto{\sigma}
   \vcenter{\hbox{
    \begin{tikzpicture}[scale=0.7]

\draw[domain=-3:3,smooth,variable=\y,thick]  plot ({0.7*sin(50*\y+25)-0.3*\y},{\y});
\filldraw ({0.7*sin(50*(1)+25)-0.3*(1)},{1}) circle (2pt) node[right]{$p_{n-1}$};
\filldraw ({0.7*sin(50*(-1)+25)-0.3*(-1)},{-1}) circle (2pt);
\filldraw ({0.7*sin(50*(-0.3)+25)-0.3*(-0.3)},{-0.3}) circle (2pt);
\filldraw ({0.7*sin(50*(-2)+25)-0.3*(-2)},{-2}) circle (2pt) node[right]{$p_{1}$};
\filldraw ({0.7*sin(50*(2)+25)-0.3*(2)},{2}) circle (2pt);
\draw[thick] ({0.7*sin(50*(2)+25)-0.3*(2)+1},{2+0.5}) -- ({0.7*sin(50*(2)+25)-0.3*(2)-1},{2-0.5}) node[below right]{$p_n$}-- ({0.7*sin(50*(2)+25)-0.3*(2)-2.5},{2-1.25});
\filldraw ({0.7*sin(50*(2)+25)-0.3*(2)-1},{2-0.5}) circle (2pt);
\draw[thick] ({0.7*sin(50*(2)+25)-0.3*(2)-1.5},{2-1.25}) -- ({0.7*sin(50*(2)+25)-0.3*(2)-3.5},{2-0.25});
\draw[thick,dotted] ({0.7*sin(50*(2)+25)-0.3*(2)-3.5},{2-0.25}) -- ({0.7*sin(50*(2)+25)-0.3*(2)-4},{2-0});
\draw[thick] ({0.7*sin(50*(2)+25)-0.3*(2)-4},{2-0}) -- ({0.7*sin(50*(2)+25)-0.3*(2)-5},{2+0.5});
\filldraw ({0.7*sin(50*(2)+25)-0.3*(2)-3},{2-0.5}) circle (2pt)node[above right]{$p_{n+1}$};
\draw[thick] ({0.7*sin(50*(2)+25)-0.3*(2)-4},{2+0.5}) -- ({0.7*sin(50*(2)+25)-0.3*(2)-7.5},{2-1.25});
\filldraw ({0.7*sin(50*(2)+25)-0.3*(2)-6},{2-0.5}) circle (2pt) node[below right]{$p_{n+m-1}$};
\filldraw ({0.7*sin(50*(2)+25)-0.3*(2)-7},{2-1}) circle (2pt) node[below right]{$p_{n+m}$};
\end{tikzpicture} }}
 \end{equation}
The section $\sigma$
is a composition of suitable boundary gluing maps, so the push-forward of a tautological cycle via $\sigma$ is tautological.

Let $Q\in \cM_{g,n}$ be a moduli point with 
a {\em nonsingular} domain curve $C$. We claim:
{\em for every $Q_1 \in \nu^{-1}(Q)$, 
 there exist  $Q_2, \ldots, Q_{gm+1} \in \nu^{-1}(Q)$ satisfying}
 \[[Q_1] + \cdots + [Q_{gm+1}] = (gm+1)[\sigma(Q)] \in A_0(\nu^{-1}(Q))\, .\]
Assuming the above claim, we can easily finish the
proof. 

Let $Q_1 \in \cM_{g,n+m}$ with $Q=\nu(Q)$ and $Q_2, \ldots, Q_{gm+1}$ as 
in the above claim. 
By the definition of $T(g,n)$, we can find 
$$P_1=Q, P_2, \ldots, P_{T(g,n)} \in \overline{M}_{g,n}$$ for which
$$[P_1] + \cdots + [P_{T(g,n)}]\in R_0(\overline{M}_{g,n})\, .$$ 
We then obtain
 \begin{align*}
  [Q_1] + \cdots + [Q_{gm+1}] + \sum_{i=2}^{T(g,n)}
(gm+1) [\sigma(P_i)] &= (gm+1)[\sigma(Q)] + \sum_{i=2}^{T(g,n)} 
(gm+1) [\sigma(P_i)] \\
  &= (gm+1) \sigma_*\left([P_1] + \cdots + [P_{T(g,n)}] \right) \in R_0(\overline{M}_{g,n+m})\, .
 \end{align*}
 Hence, $T(g,n+m) \leq (gm+1)\cdot T(g,n)$.
 
We now prove the required claim.
For $$Q=(C,p_1, \ldots, p_n) \in \cM_{g,n}\, ,$$ 
the fibre $\nu^{-1}(Q)$ is isomorphic to a blow-up of the product $C^{m}$. 
Since the natural map 
$$\nu^{-1}(Q) \to C^{m}$$
 is a birational morphism between nonsingular varieties, 
we have an induced isomorphism 
$$A_0(\nu^{-1}(Q)) \to A_0(C^{m})$$
 by \cite[Example 16.1.11]{fulton}. We can therefore
 verify the claim on $C^{m}$ instead of $\nu^{-1}(Q)$. 
The image of $\sigma(Q)$  in $C^m$
is exactly the point 
$$(p_n, \ldots, p_n) \in C^{m}\, .$$
 
By Riemann-Roch, every line bundle on $C$ of degree at least $g$ is effective. 
In other words, any divisor of degree at least $g$ can be written as a sum of points on $C$.
Assume we are given 
$$Q_1=(x_1, \ldots, x_{m}) \in C^{m}\, .$$ 
 Then, there exist points $x_{1,1}, \ldots, x_{1,g} \in C$ satisfying
 \[[x_{1,1}] + \cdots + [x_{1,g}] = (g+1)[p_n] - [x_{1}] \in A_0(C)\, .\]
 Let $Q_{i+1} = (x_{1,i},x_2, \ldots, x_{m}) \in C^{m}$ for $i=1, \ldots, g$.
 We have 
 \[[Q_1] + \cdots + [Q_{g+1}] = (g+1) [(p_n,x_2, \ldots, x_{m})] \in 
A_0(C^{m})\, .\]
 For the next step, there exist points
$x_{2,1}, \ldots, x_{2,g} \in C$ satisfying
 \[[x_{2,1}] + \cdots + [x_{2,g}] = (2g+1)[p_n] - (g+1)[x_{2}] 
\in A_0(C).\]
 Let $Q_{g+1+i}=(p_n,x_{2,i},x_3, \ldots, x_{n'})$ for $i=1, \ldots, g$.
 We have
 \[[Q_1] + \cdots + [Q_{2g+1}] = (2g+1) [(p_n,p_n,x_3, \ldots, x_{n'})] \in 
A_0(C^{m}).\]
After iterating the above procedure, we find points
$Q_1, \ldots, Q_{gm+1}$ satisfying
 \[[Q_1] +  \cdots + [Q_{gm+1}] = (gm+1) [(p_n, \ldots, p_n)] 
\in A_0(C^{m})\]
 as desired.
\qed

\begin{question}
Does $T(g,n)$ really grow linearly as $n\rightarrow \infty$?
\end{question}

By results{\footnote{We thank Qizheng Yin for pointing
out the connection.}} 
of Voisin (see Theorem 1.4 of \cite{VAV}), the
analogous $T$ number of an abelian variety $A$ is at least $\dim(A)+1$.
The linear growth there perhaps also suggests 
a linear lower bound for $T(g,n)$ as $n\rightarrow \infty$.


\ifx\undefined\bysame
\newcommand{\bysame}{\leavevmode\hbox to3em{\hrulefill}\,}
\fi

\end{document}